\title[ ]{Glasner property for linear group actions and their products}
\author{Kamil Bulinski}
\address{School of Mathematics and Statistics, University of Sydney, Australia}
\email{kamil.bulinski@sydney.edu.au}
\author{Alexander Fish}
\address{School of Mathematics and Statistics, University of Sydney, Australia}
\email{alexander.fish@sydney.edu.au}
\begin{document}
\maketitle
\raggedbottom

\newcommand{\cA}{\mathcal{A}}
\newcommand{\cB}{\mathcal{B}}
\newcommand{\cC}{\mathcal{C}}
\newcommand{\cD}{\mathcal{D}}
\newcommand{\cE}{\mathcal{E}}
\newcommand{\cF}{\mathcal{F}}
\newcommand{\cG}{\mathcal{G}}
\newcommand{\cH}{\mathcal{H}}
\newcommand{\cI}{\mathcal{I}}
\newcommand{\cJ}{\mathcal{J}}
\newcommand{\cK}{\mathcal{K}}
\newcommand{\cL}{\mathcal{L}}
\newcommand{\cM}{\mathcal{M}}
\newcommand{\cN}{\mathcal{N}}
\newcommand{\cO}{\mathcal{O}}
\newcommand{\cP}{\mathcal{P}}
\newcommand{\cQ}{\mathcal{Q}}
\newcommand{\cR}{\mathcal{R}}
\newcommand{\cS}{\mathcal{S}}
\newcommand{\cT}{\mathcal{T}}
\newcommand{\cU}{\mathcal{U}}
\newcommand{\cV}{\mathcal{V}}
\newcommand{\cW}{\mathcal{W}}
\newcommand{\cX}{\mathcal{X}}
\newcommand{\cY}{\mathcal{Y}}
\newcommand{\cZ}{\mathcal{Z}}
\newcommand{\bA}{\mathbb{A}}
\newcommand{\bB}{\mathbb{B}}
\newcommand{\bC}{\mathbb{C}}
\newcommand{\bD}{\mathbb{D}}
\newcommand{\bE}{\mathbb{E}}
\newcommand{\bF}{\mathbb{F}}
\newcommand{\bG}{\mathbb{G}}
\newcommand{\bH}{\mathbb{H}}
\newcommand{\bI}{\mathbb{I}}
\newcommand{\bJ}{\mathbb{J}}
\newcommand{\bK}{\mathbb{K}}
\newcommand{\bL}{\mathbb{L}}
\newcommand{\bM}{\mathbb{M}}
\newcommand{\bN}{\mathbb{N}}
\newcommand{\bO}{\mathbb{O}}
\newcommand{\bP}{\mathbb{P}}
\newcommand{\bQ}{\mathbb{Q}}
\newcommand{\bR}{\mathbb{R}}
\newcommand{\bS}{\mathbb{S}}
\newcommand{\bT}{\mathbb{T}}
\newcommand{\bU}{\mathbb{U}}
\newcommand{\bV}{\mathbb{V}}
\newcommand{\bW}{\mathbb{W}}
\newcommand{\bX}{\mathbb{X}}
\newcommand{\bY}{\mathbb{Y}}
\newcommand{\bZ}{\mathbb{Z}}

\newcounter{dummy} \numberwithin{dummy}{section}

\theoremstyle{definition}
\newtheorem{mydef}[dummy]{Definition}
\newtheorem{prop}[dummy]{Proposition}
\newtheorem{corol}[dummy]{Corollary}
\newtheorem{thm}[dummy]{Theorem}
\newtheorem{lemma}[dummy]{Lemma}
\newtheorem{eg}[dummy]{Example}
\newtheorem{notation}[dummy]{Notation}
\newtheorem{remark}[dummy]{Remark}
\newtheorem{claim}[dummy]{Claim}
\newtheorem{Exercise}[dummy]{Exercise}
\newtheorem{question}[dummy]{Question}

\newtheorem{theo}{Theorem}
\newtheorem{mainthm}{Theorem}[theo] 
\renewcommand{\themainthm}{\Alph{mainthm}}

\begin{abstract}

A theorem of Glasner from 1979 shows that if $Y \subset \bT = \bR/\bZ$ is infinite then for each $\epsilon > 0$ there exists an integer $n$ such that $nY$ is $\epsilon$-dense. This has been extended in various works by showing that certain irreducible linear semigroup actions on $\bT^d$ also satisfy such a \textit{Glasner property} where each infinite set (in fact, arbitrarily large finite set) will have an $\epsilon$-dense image under some element from the acting semigroup. We improve these works by proving a quantitative Glasner theorem for irreducible linear group actions with Zariski-connected Zariski-closure. This makes use of recent results on linear random walks on the torus. We also pose a natural question that asks whether the cartesian product of two actions satisfying the Glasner property also satisfy a Glasner property for infinite subsets which contain no two points on a common vertical or horizontal line. We answer this question affirmatively for many such Glasner actions by providing a new Glasner-type theorem for linear actions that are not irreducible, as well as polynomial versions of such results.

\end{abstract}

\section{Introduction}

\subsection{Background}

A theorem of Glasner from 1979 \cite{Glasner79} shows that if $Y \subset \bT = \bR/\bZ$ is infinite then for each $\epsilon > 0$ there exists an integer $n$ such that $nY$ is $\epsilon$-dense. A more quantitative version was obtained by Berend-Peres \cite{Berend-Peres}, which states that there exist contstants $c_1,c_2>0$ such that if $Y \subset \bT/\bR$ satisfies $|Y| > \left(c_1/\epsilon\right)^{c_2/\epsilon}$ then $nY$ is $\epsilon$-dense in $\bT$ for some $n \in \bN$. This was improved significantly in the seminal work of Alon-Peres \cite{Alon-Peres} which provided the optimal lower bound as follows.

\begin{thm}[Alon-Peres \cite{Alon-Peres}] For $\delta>0$ there exists $\epsilon_{\delta}>0$ such that for all $0<\epsilon < \epsilon_{\delta}$ and $Y \subset \bT$ with $|Y| > \epsilon^{-2-\delta}$ then there exists $n$ so that $nY$ is $\epsilon$-dense.
\end{thm}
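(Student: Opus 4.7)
The plan is to translate the problem into Fourier analysis on $\bT$. Let $\mu_Y = \frac{1}{|Y|}\sum_{y \in Y}\delta_y$ be the empirical measure, so that $\widehat{n_* \mu_Y}(k) = \widehat{\mu_Y}(nk)$. For $nY$ to be $\epsilon$-dense it suffices that the convolution $n_* \mu_Y * \phi_\epsilon$ be pointwise positive for a nonnegative bump $\phi_\epsilon$ concentrated on an interval of length $\epsilon$. Expanding in characters, this reduces to arranging that
\begin{equation*}
\sum_{0 < |k| \le K} |\widehat{\phi_\epsilon}(k)|\,|\widehat{\mu_Y}(nk)|
\end{equation*}
is smaller than the constant term $\widehat{\phi_\epsilon}(0) \asymp \epsilon$, where $K \asymp 1/\epsilon$. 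Thus the real goal becomes: find a single dilate $n \in \{1,\dots,N\}$ (for a suitable $N = N(\epsilon,\delta)$) for which the entire block $\{\widehat{\mu_Y}(nk) : 1 \le |k| \le K\}$ is simultaneously small.

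First I would attack this by an averaged high-moment argument. By Parseval,
\begin{equation*}
\sum_{n=1}^{N} |\widehat{\mu_Y}(n)|^2
= \frac{1}{|Y|^2}\sum_{y,y' \in Y}\sum_{n=1}^{N} e^{2\pi i n(y-y')},
\end{equation*}
which is controlled by $N/|Y|$ plus an off-diagonal piece measuring how many pairs $y-y'$ cluster near rationals of small denominator. A pure $L^2$ bound yields only a polynomial threshold strictly worse than $\epsilon^{-2}$; to reach the nearly optimal $\epsilon^{-2-\delta}$ I would instead estimate the higher moment $\sum_{n=1}^{N}|\widehat{\mu_Y}(n)|^{2p}$ for $p \asymp 1/\delta$. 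This counts additive $2p$-tuples $(y_1,\dots,y_{2p}) \in Y^{2p}$ with $\sum \pm y_i$ close to $0$, and combined with Markov's inequality and a union bound over $|k| \le K$, it locates an $n \le N$ on which all relevant Fourier coefficients are simultaneously of size $o(\epsilon)$.

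The main obstacle, and the crux of the Alon--Peres argument, is handling sets $Y$ that concentrate near a coset of a finite cyclic subgroup; such $Y$ have huge additive energy and defeat the moment bound directly. The remedy is a dichotomy: either $Y$ is ``Fourier-generic'' and the moment method above succeeds with $N$ polylogarithmic in $1/\epsilon$, or $Y$ has strong additive structure, in which case one first multiplies by the denominator $q$ of the offending rational to reduce to a dense subset of an arithmetic progression, and then finishes by an elementary pigeonhole. Balancing the two regimes --- choosing $N$ and the moment order $p$ in terms of $\delta$ --- is what yields the threshold $|Y| > \epsilon^{-2-\delta}$, and fixes $\epsilon_\delta$ as the scale at which the moment estimate starts to dominate the structural one.
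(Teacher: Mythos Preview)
The paper does not contain a proof of this statement: Theorem~1.1 is quoted as a known result of Alon and Peres and is used only as motivation in the introduction. There is therefore nothing in the paper to compare your argument against.

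That said, it is worth noting that the Alon--Peres machinery, as it is actually deployed later in this paper (Proposition~\ref{prop: h_q sum bound}, Theorem~\ref{thm: BMV bound}, Lemma~\ref{lemma: BMV consequence}), does not proceed via high moments and a structured/generic dichotomy. The scheme is more direct: assume no dilate is $\epsilon$-dense, apply the Erd\H{o}s--Tur\'an type inequality of Barton--Montgomery--Vaaler to get, for each $n$, a lower bound $k^2 \ll \epsilon^{-1}\sum_{0<|m|\le M}\sum_{i,j} e(mn(x_i-x_j))$, then average over $n$. The diagonal gives $k$, and the off-diagonal is controlled \emph{uniformly over all $Y$} by the arithmetic function $h_q$ counting pairs whose difference has exact denominator $q$, together with the single combinatorial bound $\sum_{q\ge 2} h_q q^{-r} \ll k^{2-r/2}$. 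No splitting into cases is needed; the $h_q$ bound absorbs the ``structured'' situation automatically. Your sketch is not wrong in spirit, but the high-moment step and the explicit dichotomy are not how the sharp exponent $2+\delta$ is obtained in \cite{Alon-Peres}, and your outline does not make clear how the moment order $p\asymp 1/\delta$ would interact with the union bound over $|k|\le K$ to produce exactly that exponent.
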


This phenomenom can be extended to other semigroup actions, thus movitating the following definition.

\begin{mydef} Let $G$ be semigroup acting on a compact metric space $X$ by continuous maps. We say that this action is \textit{Glasner} if for all infinite $Y \subset X$ there exists $g \in G$ such that $gY$ is $\epsilon$-dense. Moreover, we say that it is \textit{$k(\epsilon)$-uniformly Glasner} if for all sufficiently small $\epsilon>0$ and $Y \subset X$ with $|Y| >  k(\epsilon)$ we have that there exists $g \in G$ such that $gY$ is $\epsilon$-dense.

\end{mydef}

For instance Kelly- L\^{e} \cite{Kelly-Le} used the techniques of Alon-Peres \cite{Alon-Peres} to show that the natural action of the multiplicative semi-group $M_{d \times d}(\bZ)$ of $d \times d$ integer matrices on $\bT^d$ is $c_d\epsilon^{-3d^2}$uniformly Glasner. This was later improved by Dong in \cite{Dong1} where he showed, using the same techniques of Alon-Peres together with the deep work of Benoist-Quint \cite{Benoist-Quint}, that the action $\operatorname{SL}_d(\bZ) \curvearrowright \bT^d$ is $c_{\delta, d} \epsilon^{-4d - \delta}$-uniformly Glasner for all $\delta >0$. Later Dong \cite{Dong2} used a different technique but still based on the work of Benoist-Quint \cite{Benoist-Quint} to show that a large class of subgroups of $\operatorname{SL}_d(\bZ)$ have the Glasner property.

\begin{thm}[Dong \cite{Dong2}] \label{thm: Zariski-dense Glasner} Let $d \geq 2$ and let $G \leq \operatorname{SL}_d(\bZ)$ be a subgroup that is Zariski dense in $\operatorname{SL}_d(\bR)$. Then $G \curvearrowright \bT^d$ is Glasner, i.e., if $Y \subset \bT^d$ is infinite and $\epsilon>0$ then there exists $g \in G$ such that $gY$ is $\epsilon$-dense.

\end{thm}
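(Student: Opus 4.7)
The approach I would take combines Benoist-Quint's classification of $\mu$-stationary measures on $\bT^d$ with an equidistribution argument for random walks, splitting into two cases according to the structure of $\overline{Y}$.

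Suppose first that $\overline{Y}$ supports a non-atomic probability measure $\nu$. Fix a compactly supported probability measure $\mu$ on $G$ whose support generates a Zariski-dense subsemigroup of $\operatorname{SL}_d(\bR)$, which is possible since $G$ itself is Zariski-dense. Since $\nu$ is non-atomic, it assigns zero mass to the countable torsion set $(\bQ/\bZ)^d$, so for $\nu$-almost every $x$ the point $x$ is non-torsion, and by Benoist-Quint the law $\mu^{*n}*\delta_x$ converges weakly to Haar measure $m$ on $\bT^d$. Integrating in $x$ yields $\mu^{*n}*\nu \to m$ weakly. Writing $\mu^{*n}*\nu = \int (g_n \cdots g_1)_*\nu \, d\mu^{\bN}(\omega)$ and applying a Cesaro/Borel-Cantelli argument to $\mu^{\bN}$-typical paths $\omega$ produces infinitely many partial products $g = g_n\cdots g_1$ for which $g_*\nu$ is weak-$*$ close to $m$; any such $g$ has $g\,\mathrm{supp}(\nu)\subset g\overline{Y}$ meeting every open $\epsilon$-ball of $\bT^d$, so $gY$ is $\epsilon$-dense.

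Suppose now $\overline{Y}$ is countable. I would pass to the diagonal action $G \curvearrowright \bT^{dK}$ for a large parameter $K = K(d,\epsilon)$ and try to locate a $K$-tuple $(y_{n_1}, \ldots, y_{n_K}) \in Y^K$ whose orbit closure meets the open set $\mathcal{D}_\epsilon \subset \bT^{dK}$ of $K$-tuples whose underlying point-set is $\epsilon$-dense in $\bT^d$. Benoist-Quint's measure classification applied to this reducible product representation pins down the possible orbit closures in $\bT^{dK}$; combined with the fact that the standard representation of $\operatorname{SL}_d$ is irreducible (so no proper subtorus of $\bT^d$ is $G$-invariant), one seeks to show that the infinitude of $Y$ forces some tuple to have a large enough orbit closure to intersect $\mathcal{D}_\epsilon$. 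For the extreme sub-case $Y \subset (\bQ/\bZ)^d$, I would instead invoke strong approximation for Zariski-dense subgroups (Nori, Matthews-Vaserstein-Weisfeiler): the reduction $G \to \operatorname{SL}_d(\bZ/N\bZ)$ is surjective for all $N$ coprime to a fixed integer, so $G$ acts transitively on primitive $N$-torsion points, which are $\epsilon$-dense once $N \gg 1/\epsilon$; since an infinite torsion $Y$ contains points of unbounded order, a single sufficiently high-order $y\in Y$ already has $Gy$ $\epsilon$-dense in $\bT^d$.

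The main obstacle is the tuple-selection step in the countable case outside the pure-torsion sub-case: namely, showing that any infinite $Y\subset\bT^d$ admits a $K$-tuple whose diagonal $G$-orbit closure in $\bT^{dK}$ intersects $\mathcal{D}_\epsilon$. I expect this to rest on a pigeonhole argument against the classification of closed $G$-invariant subsets of $\bT^{dK}$ (essentially a countable list of proper subvarieties plus $\bT^{dK}$ itself), exploiting the fact that $\overline{Y}$ infinite in $\bT^d$ cannot force all $K$-tuples drawn from $Y$ to lie inside the same proper $G$-invariant subvariety of $\bT^{dK}$.
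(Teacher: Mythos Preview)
The paper does not supply its own proof of this statement; Theorem~\ref{thm: Zariski-dense Glasner} is quoted from Dong as background. The paper's contribution in this direction is Theorem~\ref{thm: quantitative Glasner for Zariski-Connected}, which (since any Zariski-dense $G\leq\operatorname{SL}_d(\bZ)$ contains a finitely generated Zariski-dense subgroup, and $\operatorname{SL}_d(\bR)$ is connected and acts irreducibly on $\bR^d$) recovers Theorem~\ref{thm: Zariski-dense Glasner} with a uniform quantitative bound. That proof proceeds by an entirely different route from yours: the Alon--Peres Fourier-analytic scheme (Lemma~\ref{lemma: BMV consequence}, Proposition~\ref{prop: h_q sum bound}) fed by the He--de~Saxc\'e effective equidistribution theorem for linear random walks (Theorem~\ref{thm: he-saxce}, Lemma~\ref{lemma: HS consequence}). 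No case split on the structure of $\overline{Y}$ is needed there, and the argument is uniform in $Y$. Your Benoist--Quint-based outline is thus closer in spirit to Dong's original argument than to anything in this paper.

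Regarding the proposal on its own terms, there are genuine gaps. In Case~2 you explicitly leave the key tuple-selection step unfinished, so that branch is incomplete by your own admission. Case~1 also has a soft spot: weak convergence $\mu^{*n}*\nu \to m$ only says that the \emph{average} of $g_*\nu$ over $g\sim\mu^{*n}$ tends to Haar, and your ``Ces\`aro/Borel--Cantelli'' remark does not obviously extract a single $g$ with $g_*\nu$ weak-$*$ close to $m$ (almost-sure convergence of Ces\`aro averages along a path yields a time-average of pushforwards near Haar, not an individual pushforward near Haar). Both gaps look repairable---the first via a sharper appeal to the Benoist--Quint classification of closed invariant subsets of $\bT^{dK}$, the second via a finite-cover argument over $\epsilon$-balls---but as written this is a sketch rather than a proof.
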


We remark that this result, unlike the aforementioned $G = \operatorname{SL}_d(\bZ)$ case in \cite{Dong1}, do not use the techniques of Alon-Peres and do not establish a uniform Glasner property ($Y$ needs to be infinite). A uniform Glasner property was obtained for the case where $G$ acts irreducibly and is generated by finitely many unipotents in \cite{BFGlasner}.

\begin{thm}\label{thm: Glasner irreducible unipotent} Let $d \geq 2 $ and let $G \leq \operatorname{SL}_d(\bZ)$ be a group generated by finitely many unipotent elements such that the representation $G \curvearrowright \bR^d$ is irreducible. Then there exists a constant $C_G>0$ such that $G \curvearrowright \bT^d$ is $\epsilon^{-C_{G}}$-uniformly Glasner, i.e., if $Y \subset \bT^d$ with $|Y|>\epsilon^{-C_G}$ then there exists $g \in G$ such that $gY$ is $\epsilon$-dense.
\end{thm}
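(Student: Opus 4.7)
The plan is to prove Theorem \ref{thm: Glasner irreducible unipotent} by adapting the Alon-Peres Fourier moment method \cite{Alon-Peres} to the linear-group setting, with the key new input being the quantitative spreading of linear random walks on $\bT^d$ driven by unipotent elements. The argument is by contradiction: assume $|Y|>\epsilon^{-C_G}$ and that no $gY$ is $\epsilon$-dense, and derive an orthogonality bound forcing $|Y|$ to be small.

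First, a standard Fourier detection lemma gives: if a probability measure $\nu$ on $\bT^d$ is not $\epsilon$-dense then $|\hat\nu(\xi)|\geq c_0\epsilon^d$ for some non-zero $\xi\in\bZ^d$ with $\|\xi\|_\infty\leq C_0/\epsilon$, obtained by testing $\nu-\mathrm{Leb}$ against a mollified bump localized in a missed $\epsilon$-ball. Applied to $\nu=g_{\ast}\mu_Y$ and using $\widehat{g_{\ast}\mu_Y}(\xi)=\hat\mu_Y(g^{\top}\xi)$, the assumed failure of $\epsilon$-denseness for every $g$ in the word ball $B_N\subset G$ (with respect to the given unipotent generators) attaches to each such $g$ a frequency $\xi_g$. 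Pigeonholing over the $O(\epsilon^{-d})$ possible values of $\xi_g$ produces a single $\xi_{\star}\neq 0$ and a subset $H\subseteq B_N$ of size $|H|\gtrsim\epsilon^d|B_N|$ on which $|\hat\mu_Y(g^{\top}\xi_{\star})|\geq c_0\epsilon^d$ for every $g\in H$.

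The main step is to show that $E:=\{g^{\top}\xi_{\star}:g\in H\}\subset\bZ^d$ is large. Because $G$ is generated by unipotents and acts irreducibly, the Zariski closure of $G$ is Zariski-connected and semisimple, so by the Tits alternative $|B_N|$ grows exponentially in $N$; moreover the stabilizer of $\xi_{\star}$ in $G^{\top}$ has infinite index, and a quantitative orbit-spreading bound drawn from the linear random-walk machinery on $\bT^d$ yields $|E|\gtrsim|H|N^{-a}$, with $E$ contained in a ball of radius at most $N^b/\epsilon$ (since word-ball norms of unipotent products grow polynomially in $N$). Combining this with the Plancherel estimate $\sum_{\eta\in B(R)}|\hat\mu_Y(\eta)|^2\lesssim R^d/|Y|$ at $R=N^b/\epsilon$ gives
\[
|E|\,(c_0\epsilon^d)^2 \;\leq\; C_1\,\frac{(N^b/\epsilon)^d}{|Y|},
\]
and substituting the lower bound for $|E|$ and choosing $N\asymp\log(1/\epsilon)$ to convert the exponential growth of $|B_N|$ into polynomial dependence on $\epsilon$ produces $|Y|\leq\epsilon^{-C_G}$, contradicting the hypothesis.

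The main obstacle is the orbit-spreading estimate $|E|\gtrsim|H|N^{-a}$: one must convert the qualitative irreducibility of $G^{\top}\curvearrowright\bR^d$ into an effective lower bound on the image of the orbit map $g\mapsto g^{\top}\xi_{\star}$, uniform in $\xi_{\star}$ of bounded norm. For a single unipotent this is an elementary Jordan-form calculation, but for a finitely generated unipotently generated group acting irreducibly it requires the random-walk spreading estimates for unipotent groups on the torus that form the main recent technical input. The exponent $C_G$ then emerges from balancing the exponential word growth of $|B_N|$ against the polynomial loss factors $N^a$ (from orbit deficiency) and $N^{bd}$ (from the Plancherel bandwidth).
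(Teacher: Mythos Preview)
Your proposal contains a fatal internal inconsistency. You invoke exponential growth of the word ball $|B_N|$ (via the Tits alternative), and simultaneously assert that $E=\{g^\top\xi_\star:g\in H\}$ lies in a ball of radius $N^b/\epsilon$ because ``word-ball norms of unipotent products grow polynomially in $N$.'' These cannot both hold: if $\|g\|\leq N^b$ for all $g\in B_N$, then $B_N$ sits inside a set of at most $O(N^{bd^2})$ integer matrices, forcing $|B_N|$ to be polynomial. For an irreducible $G\leq\operatorname{SL}_d(\bZ)$ with $d\geq 2$, operator norms on $B_N$ in fact grow exponentially (already in $\operatorname{SL}_2(\bZ)$, alternating products of the two standard unipotent generators are hyperbolic). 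With the correct bandwidth $R\asymp e^{cN}/\epsilon$, the factor $R^d$ in your final inequality is exponential in $N$ and swamps any gain from $|E|$, so the balancing step at $N\asymp\log(1/\epsilon)$ does not produce the claimed bound. Separately, the Plancherel estimate $\sum_{\eta\in B(R)}|\hat\mu_Y(\eta)|^2\lesssim R^d/|Y|$ is false without a separation hypothesis on $Y$: if all points of $Y$ lie within $R^{-1}$ of one another then each $|\hat\mu_Y(\eta)|\approx 1$ and the sum is $\asymp R^d$ regardless of $|Y|$. Controlling the contribution of close pairs is precisely the issue, and your sketch has no mechanism for it.

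For comparison, the paper does not prove Theorem~\ref{thm: Glasner irreducible unipotent} directly (it is quoted from \cite{BFGlasner}), but Theorem~\ref{thm: quantitative Glasner for Zariski-Connected} contains it as a special case, and the argument there is structurally different from yours. One applies Lemma~\ref{lemma: BMV consequence} to obtain, for \emph{every} $g\in G$, the second-moment bound $k^2\lesssim\epsilon^{-d}\sum_{\vec m\in B(M)}\sum_{i,j}e(\vec m\cdot g(x_i-x_j))$; one then averages this inequality over a random walk $\mu^{*n}$ and lets $n\to\infty$, using the He--de Saxc\'e equidistribution theorem (Theorem~\ref{thm: he-saxce}, via Lemma~\ref{lemma: HS consequence}) to bound $\limsup_n|\widehat{\mu^{*n}*\delta_{x_i-x_j}}(\vec m)|$ by $2\|\vec m\|q^{-1/C}$ when $x_i-x_j$ is rational of denominator $q$, and by $0$ when irrational. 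The resulting sum $\sum_q h_q q^{-1/C}$ is then controlled by Proposition~\ref{prop: h_q sum bound}, which is exactly the device that handles near-coincident pairs and replaces your Plancherel step. No pigeonhole on frequencies, no orbit-size estimate in $\bZ^d$, and no word-growth bookkeeping are needed.
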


Examples of such groups include the subgroup of $\operatorname{SL}_d(\bZ)$ preserving a diagonal quadratic form with coefficients $\pm 1$, not all of the same sign (see \cite{BFGlasner} or Proposition A.5 in \cite{BFTwisted} for more details).

\subsection{Glasner property for groups with Zariski-connected Zariski-closures}

The first main result of this paper extends Theorem~\ref{thm: Glasner irreducible unipotent} by replacing the requirement that $G$ is generated by finitely many unipotent elements by the weaker assumption that $G$ has Zariski-connected Zariski-closure. It also improves Theorem~\ref{thm: Zariski-dense Glasner} by providing a uniform Glasner property and also not requiring the Zariski-closure to be the full $\operatorname{SL}_d(\bR)$.

\begin{mainthm}\label{thm: quantitative Glasner for Zariski-Connected}  Let $G \leq \operatorname{SL}_d(\bZ)$ be a finitely generated group with Zariski-connected Zariski-closure in $\operatorname{SL}_d(\bR)$ such that $G \curvearrowright \bR^d$ is an irreducible representation. Then there exists $C_{G} >0$ such that $G \curvearrowright \bT^d$ is $\epsilon^{-C_{G}}$-uniformly Glasner; i.e., if $Y \subset \bT^d$ with $|Y|>\epsilon^{-C_{G}}$ then there exists $g \in G$ such that $g Y$ is $\epsilon$-dense.

\end{mainthm}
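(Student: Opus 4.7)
The plan is to follow the Fourier-analytic framework of Alon--Peres, as extended to higher-dimensional linear actions in Kelly--L\^e \cite{Kelly-Le} and in the authors' earlier work \cite{BFGlasner}, while replacing the combinatorial input previously extracted from unipotent generators by one of the recent quantitative equidistribution theorems for linear random walks on $\bT^d$ driven by a group with Zariski-connected Zariski-closure, exactly as flagged in the abstract.

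Concretely, fix a finite symmetric generating set $S$ of $G$, let $\mu$ be the uniform measure on $S$, and set $R \asymp \log(1/\epsilon)$. Assume $|Y| = N > \epsilon^{-C_G}$ and, for contradiction, that $gY$ is not $\epsilon$-dense for any $g$ in the support of $\mu^{\ast R}$. For each such $g$, pick a bump $b_g \geq 0$ supported in an $\epsilon$-ball disjoint from $gY$, with $\int b_g = 1$, $\|\hat b_g\|_{\ell^2}^2 \lesssim \epsilon^{-d}$, and $\hat b_g$ essentially supported in $\{\|\xi\| \leq M\}$ with $M \sim 1/\epsilon$. The identity $\sum_{y \in Y} b_g(gy) = 0$ expanded in Fourier and isolated at $\xi = 0$ yields $N \lesssim \sum_{0 < \|\xi\| \leq M} |\hat b_g(\xi)|\,|\hat 1_Y(g^T\xi)|$, and after squaring, Cauchy--Schwarz, and averaging in $g \sim \mu^{\ast R}$ one obtains
\[
N^2 \,\lesssim\, \epsilon^{-d}\sum_{0 < \|\xi\| \leq M}\;\sum_{y,y' \in Y} \widehat{\mu^{\ast R} \ast \delta_{y-y'}}(\xi).
\]

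The heart of the argument is then a quantitative equidistribution statement, available for Zariski-connected irreducible $G$: there exist $\alpha_G, c_G > 0$ so that for any nonzero $\xi$ with $\|\xi\| \leq M$ and any $z \in \bT^d$ that is not $\epsilon^{c_G}$-close to a rational point of denominator $\leq \epsilon^{-\alpha_G}$, one has $|\widehat{\mu^{\ast R} \ast \delta_z}(\xi)| \leq \epsilon^{c_G}$. Plugging this into the estimate above, the pairs $(y,y')$ with ``generic'' difference $y-y'$ contribute $\ll N^2$ once $C_G$ is chosen large, while pairs with $y - y'$ close to a rational of denominator $q \leq \epsilon^{-\alpha_G}$ are handled by a pigeonhole that concentrates a large subset of $Y$ onto the preimage in $\bT^d$ of a single residue in $(\bZ/q\bZ)^d$; irreducibility of $G \curvearrowright \bR^d$ together with the Zariski-connectedness of its closure then rules out such concentration except for $q$ in a bounded set, and a short iteration closes the argument.

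I expect the main obstacle to lie not in the Fourier machinery, which largely parallels the proof of Theorem~\ref{thm: Glasner irreducible unipotent}, but in selecting and citing a quantitative equidistribution theorem in precisely the form above and then tracking the polynomial exponents $\alpha_G$, $c_G$, $C_G$ through the averaging carefully enough that the uniform bound $|Y| > \epsilon^{-C_G}$ drops out with a polynomial $C_G$. The Zariski-connectedness hypothesis is exactly what frees the argument from the need for unipotent generators that was imposed in Theorem~\ref{thm: Glasner irreducible unipotent}, since it allows direct appeal to the Benoist--Quint-based random-walk equidistribution machinery in its recent quantitative forms.
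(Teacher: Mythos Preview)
Your overall framework is right and matches the paper: set up the Alon--Peres inequality via Lemma~\ref{lemma: BMV consequence} (your bump-function version is equivalent), average over the random walk $\mu^{\ast n}$, and feed in a quantitative equidistribution theorem for linear random walks with Zariski-connected closure (the paper uses He--de Saxc\'e \cite{HeSaxce2022}). The displayed inequality
\[
N^2 \;\lesssim\; \epsilon^{-d}\sum_{0<\|\xi\|\le M}\;\sum_{y,y'\in Y}\widehat{\mu^{\ast n}\ast\delta_{y-y'}}(\xi)
\]
is exactly what the paper obtains.

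The gap is in how you dispose of the ``bad'' pairs. Your plan is to work at a fixed time $R\asymp\log(1/\epsilon)$, split according to whether $y-y'$ is $\epsilon^{c_G}$-close to a rational of denominator $\le\epsilon^{-\alpha_G}$, and then argue that too many near-rational differences force a large subset of $Y$ into a single residue class of $(\bZ/q\bZ)^d$, which ``irreducibility together with Zariski-connectedness rules out.'' This last step does not hold: nothing about irreducibility of $G\curvearrowright\bR^d$ prevents an arbitrary finite set $Y$ from sitting in, or near, a single coset of $\tfrac{1}{q}\bZ^d/\bZ^d$ (and such a coset can perfectly well have $\epsilon$-dense $G$-images once $q$ is moderately large). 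Nor does ``$y-y'$ close to rational'' pigeonhole $y,y'$ individually into residue classes. The iteration you allude to is not specified and it is unclear what quantity would strictly decrease.

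The paper avoids all of this by two clean moves you are missing. First, it sends $n\to\infty$ rather than fixing $R$: by Lemma~\ref{lemma: HS consequence} (a direct corollary of Theorem~\ref{thm: he-saxce}), irrational differences then contribute $0$, and a rational difference of exact denominator $q$ contributes at most $2\|\xi\|q^{-1/C}$. This replaces your fuzzy ``close to rational'' dichotomy by an exact rational/irrational one, with a power-of-$q$ saving in the rational case. Second, the rational contribution is summed using the purely combinatorial $h_q$-bound of Proposition~\ref{prop: h_q sum bound}, giving $\sum_{q\ge2} h_q\,q^{-1/C}\le C' k^{\,2-\frac{1}{C(d+1)}}$; this is the substitute for your pigeonhole/iteration and requires no structural input about $G$ beyond what already went into the Fourier decay. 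Plugging in yields $k^2\lesssim \epsilon^{-3d}k^{\,2-\frac{1}{C(d+1)}}+\epsilon^{-2d}k$, hence the polynomial threshold $k\le\epsilon^{-C_G}$. I would replace your final paragraph by these two ingredients.
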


\subsection{Glasner property for product actions}

Let $G_1 \curvearrowright X_1$ and $G_2 \curvearrowright X_2$ be two actions on compact metric spaces that have the Glasner property. We consider the \textbf{product action} \begin{align*}  G_1 \times G_2 & \curvearrowright X_1 \times X_2 \\ 
(g_1,g_2) \cdot (x_1, x_2) &= (g_1x_1, g_2x_2). \end{align*} Clearly, we see that it is not Glasner since a horizontal line $Y = X_1 \times \{x_2\}$ is infinite and $(g_1,g_2)Y \subset X_1 \times \{g_2 x_2\}$ is another horizontal line, which cannot be $\epsilon$-dense for all $\epsilon>0$. The same obstruction occurs if $Y$ is a finite union of horizontal and vertical lines. It is thus natural to ask whether this is the only obstruction by considering infinite sets $Y \subset X_1 \times X_2$ such that no two points are on a common vertical or horizontal line.

\begin{question}[Glasner for product action]\label{question: Glasner product actions} Suppose that $G_1 \curvearrowright X_1$ and $G_2 \curvearrowright X_2$ are Glasner. Suppose $Y \subset X_1 \times X_2$ is an infinite set such that both of the projections onto $X_1$ and $X_2$ are injective on $Y$ (i.e., if $y, y' \in Y$ are distinct then $\pi_{X_1}y \neq \pi_{X_1}y'$ and $\pi_{X_2}y \neq \pi_{X_2}y'$ where $\pi_{X_i}:X_1 \times X_2 \to X_i$ is the projection). Then is it true that for all $\epsilon>0$ there exists $g \in G_1 \times G_2$ such that $gY$ is $\epsilon$-dense in $X_1 \times X_2$ ?

\end{question}

We are unable to find any counterexample so far. The main goal of this paper is to answer this question in the affirmative for many of the semigroups of endomorphisms on $\bT^d$ presented above. We first present a special case of one of our main results, which verifies this for the situation of the original Glasner theorem.

\begin{prop}\label{prop: Glasner 2d} Suppose $Y \subset \bT^2$ is infinite and both of the projections onto the $\bT$ factors are injective on $Y$. Then for all $\epsilon >0$ there exists $(n,m) \in \bN^2$ such that $(n,m)Y$ is $\epsilon$-dense in $\bT^2$. In fact, if $P_1(x),P_2(x) \in \bZ[x]$ are polynomials such that no non-trivial linear combination is constant then for all $\epsilon>0$ there exists $n \in \bN$ such that $(P_1(n), P_2(n))Y$ is $\epsilon$-dense.

\end{prop}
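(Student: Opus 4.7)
The plan is to deduce the first assertion from the stronger polynomial one by choosing $P_1(x) = x$ and $P_2(x) = x^2$; this pair satisfies the hypothesis since $ax + bx^2$ is constant only when $a = b = 0$, and $(P_1(n), P_2(n)) = (n, n^2)$ is a valid pair in $\bN^2$. So I focus on the polynomial statement.

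Let $\epsilon > 0$. I would pick a large finite $Y_0 \subset Y$ of cardinality $M$ and form the uniform probability measure $\mu = \frac{1}{M}\sum_{y \in Y_0}\delta_y$. A standard mollifier/Fourier argument reduces the $\epsilon$-density of $\mathrm{supp}((P_1(n),P_2(n))_*\mu)$ to the uniform smallness
\[ |\hat\mu(aP_1(n), bP_2(n))| < \eta(\epsilon) \quad \text{for every } (a,b) \in \bZ^2 \setminus \{(0,0)\} \text{ with } |a|, |b| \leq T(\epsilon), \]
where one uses that $\widehat{(P_1(n),P_2(n))_*\mu}(a,b) = \hat\mu(aP_1(n), bP_2(n))$. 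The task becomes locating $n \in \bN$ making these finitely many Fourier coefficients small.

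For this I would run a second moment argument over $n \in [N]$. Injectivity of both projections guarantees that for every off-diagonal pair $y = (x,z) \neq y' = (x',z')$ in $Y_0$ one has $x - x' \neq 0$ and $z - z' \neq 0$, so the hypothesis on $(P_1, P_2)$ forces
\[ R_{y,y'}(n) := a(x-x')P_1(n) + b(z-z')P_2(n) \]
to be a nonconstant polynomial in $n$. Expanding the square,
\[ \mathbb E_{n \in [N]}|\hat\mu(aP_1(n), bP_2(n))|^2 = \frac{1}{M} + \frac{1}{M^2}\sum_{y \neq y' \in Y_0} \mathbb E_{n \in [N]} e^{2\pi i R_{y,y'}(n)}, \]
and Weyl's equidistribution theorem for polynomial sequences yields that the inner average tends to $0$ as $N \to \infty$, with a quantitative van der Corput rate depending on Diophantine properties of the nonconstant coefficients $a(x-x')c_{1,k} + b(z-z')c_{2,k}$ of $R_{y,y'}$, where $c_{i,k}$ are the coefficients of $P_i$.

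The hard part will be uniformizing this decay over the off-diagonal pairs $(y, y')$ and summing over the $O(T^2)$ frequencies $(a,b)$. Resonant pairs, for which $(x - x', z - z')$ has small denominators so that Weyl decay is slow, must be handled either by an explicit periodicity computation replacing Weyl, or by a double-counting argument of Alon--Peres type showing that such pairs are few inside $Y_0$. Once $M$ is taken sufficiently large in terms of $\epsilon$ and then $N$ is taken large in terms of $M$, the averaged second moment drops below $\eta(\epsilon)^2$ times the number of relevant frequencies, and Markov's inequality produces some $n \in [N]$ making $(P_1(n), P_2(n))Y_0 \subset (P_1(n), P_2(n))Y$ $\epsilon$-dense in $\bT^2$.
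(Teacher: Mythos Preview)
Your outline is correct and is, at the level of ideas, the same argument the paper uses: the paper packages exactly this machinery (the BMV/Fourier reduction of Lemma~\ref{lemma: BMV consequence}, averaging over $n$, Weyl equidistribution for irrational differences, Hua's bound for rational differences, and the Alon--Peres $h_q$ count of Proposition~\ref{prop: h_q sum bound}) into the general Theorem~\ref{thm: main polynomial intro}, and then proves Proposition~\ref{prop: Glasner 2d} in one line by applying that theorem with $A(x)=\operatorname{diag}(P_1(x),P_2(x))$ and checking that the injectivity of both projections forces condition~(\ref{condition: Glasner difference set}). So you are re-deriving the $d=2$ diagonal case of Theorem~\ref{thm: main polynomial intro} from scratch rather than invoking it.

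One point to sharpen: in your treatment of resonant pairs you write ``either by an explicit periodicity computation \ldots\ or by a double-counting argument of Alon--Peres type,'' but in fact both ingredients are needed together, not as alternatives. When $y-y'$ is rational with reduced denominator $q$, Weyl does not give decay; one uses Hua's exponential sum bound to get a contribution of size $O(q^{-1/D+\delta})$ per pair, and then the Alon--Peres bound $\sum_q h_q q^{-r}\ll k^{2-r/(d+1)}$ controls the sum over all such rational pairs. Once this is in place your final paragraph goes through.
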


Our next main result demonstrates this phenomenom for the Glasner actions of unipotently generated groups presented in Theorem~\ref{thm: Glasner irreducible unipotent}.

\begin{mainthm}\label{thm: Glasner for product unipotent} Let $G_1 \leq \operatorname{SL}_{d_1}(\bZ)$  and $G_2 \leq \operatorname{SL}_{d_2}(\bZ)$ be subgroups generated by finitely many unipotent elements such that $G_1 \curvearrowright \bR^{d_1}$ and $G_2 \curvearrowright \bR^{d_2}$ are irreducible representations, where $d_1,d_2 \geq 2$ are integers. Then for all $\epsilon>0$ there exists $k \in \bN$ such that if $Y \subset \bT^{d_1} \times \bT^{d_2}$ with $|Y| \geq k$ satisfies that the projections to $\bT^{d_1}$ and $\bT^{d_2}$ are injective on $Y$, then there exists $g \in G_1 \times G_2$ such that $gY$ is $\epsilon$-dense in $\bT^{d_1} \times \bT^{d_2}$.

\end{mainthm}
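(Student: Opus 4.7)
The plan is to view the product action as the reducible linear action of $G_1\times G_2$ on $\bT^{d_1+d_2}$ coming from the block-diagonal embedding $G_1\times G_2\hookrightarrow\operatorname{SL}_{d_1+d_2}(\bZ)$, and then apply the Fourier / Alon--Peres second-moment method. Writing $\mu_Y$ for the empirical measure on $Y$ and using the standard Fourier criterion (convolving with a smooth bump of width $\epsilon$), $gY$ fails to be $\epsilon$-dense only if there is a non-zero character $\xi=(\xi_1,\xi_2)\in\bZ^{d_1}\oplus\bZ^{d_2}$ with $\|\xi\|\le C/\epsilon$ satisfying $|\hat\mu_Y(g_1^{\ast}\xi_1,g_2^{\ast}\xi_2)|\gtrsim\epsilon^{d_1+d_2}$. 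Thus it suffices, for $|Y|\ge k(\epsilon)$, to locate a single pair $(g_1,g_2)$ killing all such low-frequency coefficients simultaneously.

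The quantitative Fourier input is the Alon--Peres-style exponential-sum bound that underlies the proof of Theorem~\ref{thm: Glasner irreducible unipotent} in \cite{BFGlasner}: for each $i\in\{1,2\}$, each non-zero $\xi\in\bZ^{d_i}$ with $\|\xi\|\le C/\epsilon$ and each non-zero $t\in\bT^{d_i}$,
\[ \left|\,\frac{1}{|B_R^{(i)}|}\sum_{g\in B_R^{(i)}} e(\xi\cdot g t)\,\right|\;\le\;\tau_R(\xi,t), \]
where $B_R^{(i)}$ is the word-length ball of radius $R$ in $G_i$ and $\tau_R$ decays (in a manner controlled by $\|t\|$) at a rate sufficient to run the original Alon--Peres argument for $G_i\curvearrowright\bT^{d_i}$. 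Averaging $|\hat\mu_Y(g_1^{\ast}\xi_1,g_2^{\ast}\xi_2)|^2$ over $(g_1,g_2)\in B_R^{(1)}\times B_R^{(2)}$, expanding the squared modulus as a double sum over $z,z'\in Y$ and applying Fubini to factor the $(g_1,g_2)$-average into a product of $G_1$- and $G_2$-averages, the diagonal $z=z'$ contributes $1/|Y|$ and the off-diagonal terms split into two cases. In the mixed case $\xi_1\ne 0$ and $\xi_2\ne 0$, the injectivity of \emph{both} projections ensures that $x\ne x'$ \emph{and} $w\ne w'$ whenever $z\ne z'$, so the displayed bound applies to each factor. In the pure case when exactly one $\xi_i$ vanishes, $\hat\mu_Y$ reduces via injectivity of the relevant $\pi_i$ to $\hat\mu_{\pi_i(Y)}$, and the single-action Alon--Peres bound for $G_i\curvearrowright\bT^{d_i}$ handles it directly.

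Combining this second-moment bound via Chebyshev's inequality with a union bound over the $O(\epsilon^{-(d_1+d_2)})$ characters $\xi$ of norm $\le C/\epsilon$ locates $(g_1,g_2)\in B_R^{(1)}\times B_R^{(2)}$ satisfying the required Fourier bound for all such $\xi$, provided $R$ and $|Y|$ are polynomial in $1/\epsilon$; this yields $k(\epsilon)$ as a polynomial in $1/\epsilon$ whose exponent depends only on $d_1,d_2$ and on the Alon--Peres exponents for the two irreducible factors. The main obstacle is the clean extraction of the uniform exponential-sum bound above: Theorem~\ref{thm: Glasner irreducible unipotent} is stated only as an $\epsilon^{-C_G}$-uniform density result, so one must trace through its proof in \cite{BFGlasner} to verify that the underlying character-sum estimate is quantitative enough and uniform enough in $\xi$ and $t$. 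Once this input is in place, the remaining second-moment plus union-bound argument is routine, and the injective-projection hypothesis plays its essential role precisely in ensuring the off-diagonal terms of the mixed case pick up cancellation from \emph{both} factors.
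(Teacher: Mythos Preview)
Your approach is genuinely different from the paper's. The paper derives Theorem~\ref{thm: Glasner for product unipotent} in about ten lines as a corollary of Theorem~\ref{thm: Glasner affine subspaces}: one observes that $G_1\times G_2\le\operatorname{SL}_{d_1+d_2}(\bZ)$ is itself generated by finitely many unipotents, and then checks (using irreducibility of each factor together with the injective-projection hypothesis) that no non-zero difference $\widetilde y-\widetilde y'$ lies in a proper $(G_1\times G_2)$-invariant affine subspace of $\bR^{d_1+d_2}$. All of the Fourier/Hua machinery stays sealed inside the proof of Theorem~\ref{thm: Glasner affine subspaces} (via Theorem~\ref{thm: main polynomial intro}) and is never reopened for the product case.

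Your direct route---factoring the $(g_1,g_2)$-average via Fubini and treating mixed versus pure frequencies separately---is plausible in spirit but has a real gap in the form you state it. The exponential-sum input underlying \cite{BFGlasner} and Theorem~\ref{thm: main polynomial intro} is \emph{not} an average over Cayley balls $B_R^{(i)}$: it is an average over a one-parameter polynomial curve $n\mapsto A_i(n)$ built from powers of the unipotent generators and a substitution $n_j\mapsto n^{R^{j-1}}$, after which Hua's bound gives decay $(Q/q)^{1/D-\delta}$ in the \emph{denominator} $q$ of the rational difference (not in $\|t\|$ as you write), while irrational differences are handled only in the limit by Weyl. No comparable cancellation is available for the raw Cayley-ball average you invoke, so the displayed bound for $\tau_R(\xi,t)$ is not something one can simply extract from \cite{BFGlasner}. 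If you replace the $B_R^{(i)}$-averages by averages over $n_i\in\{1,\dots,N_i\}$ along such $A_i(n_i)$, the Fubini factorization still works (the phase is additive in $n_1,n_2$), each factor is controlled by Hua, and the off-diagonal sum $\sum_{z\neq z'} q_1^{-\alpha}q_2^{-\alpha}$ can be bounded by $\sum_{z\neq z'} q_1^{-\alpha}$ and then by Proposition~\ref{prop: h_q sum bound} applied to $\pi_1(Y)\subset\bT^{d_1}$. So a corrected version of your argument can be made to go through, but it re-derives for each factor precisely the ingredients that the paper packages once and for all in Theorem~\ref{thm: main polynomial intro}; the paper's reduction is both shorter and yields the more general Theorem~\ref{thm: Glasner affine subspaces} as a byproduct.
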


In light of Theorem~\ref{thm: quantitative Glasner for Zariski-Connected}, it is interesting to ask if the condition that $G_1,G_2$ are generated by unipotent elements can be replaced with the (weaker) assumption that $G_1,G_2$ have Zariski-connected Zariski-closures.

\subsection{Non-irreducible actions}

In the setting of endomorphisms on $\bT^d$, any product action is another action by endomorphisms. Unfortunately, it is not irreducible hence Theorem~\ref{thm: Glasner irreducible unipotent} and Theorem~\ref{thm: quantitative Glasner for Zariski-Connected} do not apply. It is thus natural to ask how one can extend these theorems to the non-irreducible case by placing suitable restrictions on the set $Y$ (in a way that is analogous to the setting in Question~\ref{question: Glasner product actions}). Our next main result achieves this for unipotently generated subgroups.

\begin{mainthm}\label{thm: Glasner affine subspaces}

 Let $G \leq \operatorname{SL}_d(\bZ)$ be a group generated by finitely many unipotent elements. Let $\widetilde{Y} \subset [0,1)^d$ be infinite such that for all distinct $\widetilde{y}, \widetilde{y}' \in \widetilde{Y}$ we have that $\widetilde{y} - \widetilde{y}'$ is not contained in any $G$-invariant proper affine subspace. Then for all $\epsilon>0$ there exists a constant $k$ such that if $|\widetilde{Y}|>k$ then there exists $g \in G$ such that $gY$ is $\epsilon$-dense in $\bT^d$, where $Y \subset \bT^d$ is the projection of $\widetilde{Y}$ onto $\bT^d$.

\end{mainthm}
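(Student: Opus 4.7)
The plan is to argue by induction on the composition length of $\bR^d$ as a $G$-module. For the base case, when $G \curvearrowright \bR^d$ is irreducible, the only proper $G$-invariant affine subspace of $\bR^d$ is $\{0\}$, so the hypothesis reduces to $\widetilde{y} \neq \widetilde{y}'$ (which is automatic), and Theorem~\ref{thm: Glasner irreducible unipotent} applies directly with threshold $k = \epsilon^{-C_G}$. For the inductive step, I rely on the structural fact that every $G$-invariant subspace of $\bR^d$ is rational: each unipotent generator $u = I + N$ has $N$ a rational nilpotent matrix, so $u$-invariant subspaces are kernels of $\bZ$-matrices, and $G$-invariant subspaces are intersections of these. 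Choose a maximal proper rational $G$-invariant $V \subsetneq \bR^d$, yielding an irreducible quotient $G \curvearrowright W := \bR^d/V$ with $\dim W \geq 2$; indeed if $\dim W = 1$ the quotient would necessarily be trivial (since $G$ is unipotent-generated), giving a nonzero $G^T$-fixed covector $\xi$, and then the affine hyperplanes $\xi^\perp + c$ (as $c$ ranges over $\bR^d$) would all be $G$-invariant proper affine subspaces and cover $\bR^d$, contradicting the hypothesis as soon as $|\widetilde{Y}| \geq 2$.

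I then intend to run the Fourier-analytic strategy of \cite{Alon-Peres,BFGlasner}: assuming for contradiction that no $g \in G$ makes $gY$ $\epsilon$-dense, one extracts for each $g \in G$ a frequency $\xi_g \in \bZ^d \setminus \{0\}$ of bounded norm with $|\widehat{\mu_Y}(g^T \xi_g)| \geq \delta$, where $\mu_Y$ is the uniform probability measure on $Y$. Such frequencies split into two classes according to whether the $G^T$-orbit of $\xi_g$ spans all of $\bR^d$, or lies inside a proper $G^T$-invariant rational subspace $U \subsetneq \bR^d$. The first class is handled directly by the random-walk Fourier-decay argument underlying Theorem~\ref{thm: Glasner irreducible unipotent}. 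For the second class, $\xi_g$ factors through the quotient $\bR^d \to \bR^d/U^\perp$, and I apply the inductive hypothesis to the action $G \curvearrowright \bR^d/U^\perp$ (of strictly smaller dimension) after verifying that the hypothesis descends to the projected set: any $G$-invariant proper affine subspace of $\bR^d/U^\perp$ pulls back to a $G$-invariant proper affine subspace of $\bR^d$, which differences of $\widetilde{Y}$ avoid by assumption.

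The main obstacle is controlling the cardinality of the projected set $\pi_V(Y) \subset \bT^d/T_V$ (where $T_V = V/(V \cap \bZ^d)$) upon passing to the quotient torus. The hypothesis guarantees that distinct $\widetilde{y} \in \widetilde{Y}$ project to distinct elements of the \emph{linear} quotient $\bR^d/V$, but collisions on the \emph{torus} quotient $\bT^d/T_V$ can still arise when $\widetilde{y} - \widetilde{y}' \in V + z$ for some integer vector $z \in \bZ^d$ whose class $[z] \in \bR^d/V$ is \emph{not} $G$-fixed; such cosets are not excluded by the hypothesis. Handling this will require either a pigeonhole argument on the fibers of $\pi_V|_Y$ (extracting a fiber, or a family of fibers, on which the hypothesis still forces enough spread), or an ultralimit extraction yielding subsequences along which the quotient cardinality remains comparable to the original. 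A secondary technical point is to ensure that the inductive Fourier-decay estimate for the trapped-orbit case composes cleanly with the full-orbit bound obtained from Theorem~\ref{thm: Glasner irreducible unipotent}, yielding a single uniform decay estimate across all bounded nonzero frequencies, thereby closing the Fourier-analytic argument and completing the induction.
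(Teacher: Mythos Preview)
Your approach is genuinely different from the paper's, and as you yourself note, it is incomplete.

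The paper does not induct on the composition length and does not split frequencies by whether their $G^T$-orbit spans. Instead, it reduces directly to Theorem~\ref{thm: main polynomial intro} by constructing a single polynomial matrix $A(x)$ from the unipotent generators. Concretely: since each $u_i$ is unipotent, $u_i^{n_i}$ has integer-polynomial entries in $n_i$, so the word $Q_N(n_1,\ldots,n_N) = u_1^{n_1}\cdots u_N^{n_N}$ is a matrix with multivariate integer-polynomial entries. A short Cayley-ball lemma shows that if $Ga$ is not contained in any proper affine subspace then already $G_d a$ (the ball of radius $d$) is not, so $\{Q_N(\vec n)a : \vec n \in \bZ^N\}$ affinely spans $\bR^d$ for every $a \in (\widetilde Y - \widetilde Y)\setminus\{0\}$. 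A monomial substitution $n_i \mapsto x^{R^{i-1}}$ collapses this to a single-variable polynomial matrix $A(x)$ for which condition~(\ref{condition: Glasner difference set}) holds, and Theorem~\ref{thm: main polynomial intro} finishes the proof. No quotient tori, no projections, no induction.

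Your plan has two substantive gaps beyond the ones you flag. First, the inductive hypothesis yields a \emph{Glasner statement} for the quotient action, not a Fourier-decay bound; it is not clear how to convert ``some $g$ makes the projected set $\epsilon'$-dense in the quotient torus'' into a uniform estimate on $|\widehat{\mu_Y}(g^T\xi)|$ for all trapped $\xi$ and all $g$ simultaneously, which is what the averaging argument of Lemma~\ref{lemma: BMV consequence} requires. Second, your assertion that ``$u$-invariant subspaces are kernels of $\bZ$-matrices'' is false in general (take $u=I$, or any $u$ with a repeated generalized eigenspace); what is true is that one can choose a \emph{rational} composition series, but then the quotient need only be $\bQ$-irreducible, which weakens your $\dim W \geq 2$ argument. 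A minor point: the method underlying Theorem~\ref{thm: Glasner irreducible unipotent} in \cite{BFGlasner} is the polynomial/exponential-sum method, not a random-walk Fourier-decay argument; the latter is what drives Theorem~\ref{thm: quantitative Glasner for Zariski-Connected}.
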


As before, it is interesting to ask whether this result holds if one replaces the assumption of $G$ being finitely genreated by unipotents with the weaker assumption that the Zariski-closure of $G$ is Zariski-connected.

\begin{proof}[Proof of Theorem~\ref{thm: Glasner for product unipotent} using Theorem~\ref{thm: Glasner affine subspaces}] Let $G = G_1 \times G_2$ and let $\widetilde{Y} \subset [0,1)^{d_1} \times [0,1)^{d_2}$ be a set of representatives for $Y \subset \bT^{d_1} \times \bT^{d_2}$. Let $a \in (\widetilde{Y} - \widetilde{Y}) \setminus \{0 \}$. Using Theorem~\ref{thm: Glasner affine subspaces} it suffies to show that if $Ga \subset W+a$ for some subspace $W \leq \bR^{d_1} \times \bR^{d_2}$ then $W = \bR^{d_1} \times \bR^{d_2}$. To see this, write $a = (a_1,a_2)$ where $a_i \in \bR^{d_i}$ and notice that $a_1,a_2$ are both non-zero (by assumption). Now for $g_1,g_1' \in G_1$ we have that $$(g_1' a_1 - g_1 a_1, 0) = (g_1',1)a - (g_1,1)a \in W.$$ In particular, since $d_1 \geq 2$ and $G_1$ acts irreducibly on $\bR^{d_1}$, we may find $g'_1 \in G_1$ such that $b_1 = g'_1 a_1 - a_1 \neq 0$ (here we use the assumption that $a_1 \neq 0$). Now we have that $$(g_1b_1,0) = (g_1g_1'a_1 - g_1a_1, 0) \in W \quad \text{ for all } g_1 \in G_1.$$ By irreducibility and $b_1 \neq 0$, this means that for all $v_1 \in \bR^{d_1}$ we have that $(v_1,0) \in W$. Similairly, we may show that $(0,v_2) \in W$ for all $v_2 \in \bR^{d_2}$. Thus $W = \bR^{d_1} \times \bR^{d_2}$. \end{proof}

\subsection{Glasner property along polynomial sequences}

Our technique for proving Theorem~\ref{thm: Glasner affine subspaces} extends the polynomial method used in \cite{BFGlasner}. Throughout this paper, we let $\pi_{\bT^d}:\bR^d \to \bT^d$ denote the quotient map.

\begin{mainthm} \label{thm: main polynomial intro} Fix $\epsilon>0$, a positive integer $d$ and let $A(x) \in M_{d \times d}(\bZ[x])$ be a matrix with integer polynomial entries. Then there exists a constant $k = k(\epsilon, A(x), d)>0$ such that the following is true: Suppose $\widetilde{Y} \subset [0,1)^d$ with $|\widetilde{Y}| \geq k$ satisfies the following condition: \begin{align}\label{condition: Glasner difference set} \text{For all } v \in \bZ^d\setminus\{ 0 \} \text{ and distinct } \widetilde{y}, \widetilde{y}' \in \widetilde{Y} \text{ we have that } v \cdot (A(x) - A(0))(\widetilde{y}-\widetilde{y}') \neq 0 \in \bR[x]. \end{align}

Letting $Y = \pi_{\bT^d}(\widetilde{Y})$, there exists $n \in \bZ$ such that $A(n)Y$ is $\epsilon$-dense in $\bT^d$.

\end{mainthm}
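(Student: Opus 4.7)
The strategy extends the Fourier-analytic Alon--Peres method used in \cite{BFGlasner} to the polynomial matrix-valued setting. Assume, for contradiction, that no $n\in\{1,2,\ldots,N\}$ has $A(n)Y$ being $\epsilon$-dense, where $N=N(\epsilon,A,d)$ is a parameter to be fixed at the end. Pick a smooth non-negative bump function $\psi\colon\bT^d\to\bR_{\ge 0}$ supported in the $(\epsilon/2)$-ball with $\widehat{\psi}(0)=\int\psi\asymp\epsilon^d$ and rapid Fourier decay $|\widehat{\psi}(v)|\le C_M\epsilon^d(1+\epsilon\|v\|)^{-M}$. For each $n$ in our range, non-density produces an $x_n\in\bT^d$ with $\sum_{y\in Y}\psi(A(n)y-x_n)=0$, and Fourier expansion plus extraction of the zero frequency yields
\[
\epsilon^d\,|Y|\;\lesssim\;\sum_{v\in\bZ^d\setminus\{0\}}|\widehat{\psi}(v)|\,|S_n(v)|,\qquad S_n(v):=\sum_{y\in Y}e\bigl(v\cdot A(n)y\bigr).
\]

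Summing over $n$ and applying Cauchy--Schwarz reduces the task to bounding, for each nonzero $v$, the second moment
\[
\sum_{n=1}^N |S_n(v)|^2\;=\;N|Y|\;+\;\sum_{\widetilde{y}\ne\widetilde{y}'\in\widetilde{Y}}\;\sum_{n=1}^N e\bigl(P_{v,\widetilde{y}-\widetilde{y}'}(n)\bigr),
\]
where $P_{v,w}(x):=v\cdot A(x)w\in\bR[x]$ has degree at most $D:=\deg A(x)$. Hypothesis \eqref{condition: Glasner difference set} is exactly the assertion that, for nonzero $v\in\bZ^d$ and distinct $\widetilde{y},\widetilde{y}'\in\widetilde{Y}$, the polynomial $P_{v,\widetilde{y}-\widetilde{y}'}(x)-P_{v,\widetilde{y}-\widetilde{y}'}(0)$ is a nonzero element of $\bR[x]$, so the inner sum in $n$ runs over a genuinely non-constant real polynomial.

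The technical engine is a quantitative polynomial Weyl estimate. Iterating van der Corput's inequality $D-1$ times reduces each $\sum_{n\le N}e(P(n))$ to weighted averages of linear exponential sums controlled by $\min\bigl(N,\|D!\,\alpha_D h_1\cdots h_{D-1}\|^{-1}\bigr)$, where $\alpha_D=v\cdot A_D(\widetilde{y}-\widetilde{y}')$ is the leading coefficient of $P_{v,\widetilde{y}-\widetilde{y}'}$; when $\alpha_D=0$ one descends inductively to the next nonvanishing coefficient guaranteed by \eqref{condition: Glasner difference set}. Averaging over the auxiliary parameters $h_j$ and then over the pairs $(\widetilde{y},\widetilde{y}')$, and truncating $v$ to $\|v\|\le \epsilon^{-O(1)}$ via the Fourier decay of $\psi$, produces the desired $o_{N\to\infty}(N|Y|^2)$ saving on the off-diagonal sum. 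Choosing $N$ sufficiently large in terms of $\epsilon$ and $A$, and then $k=|\widetilde{Y}|$ sufficiently large in terms of $N$, forces a contradiction with the bump-function lower bound.

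The principal obstacle is that the real coefficients $\alpha_k=v\cdot A_k(\widetilde{y}-\widetilde{y}')$ may be arbitrarily well approximable by rationals with small denominators, so that a pointwise Weyl bound for a single triple $(v,\widetilde{y},\widetilde{y}')$ need not suffice. What is required is cancellation \emph{on average} over the family indexed by $(\widetilde{y},\widetilde{y}')$: since the $\alpha_k$ depend linearly on $\widetilde{y}-\widetilde{y}'$, the "bad" differences---those for which the associated polynomial is close to integer-valued---lie near a controlled union of affine subspaces of $\bR^d$, whose intersection with the difference set $\widetilde{Y}-\widetilde{Y}$ is sub-quadratic in $|\widetilde{Y}|$ once $k$ is large enough. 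Quantifying this trade-off and balancing it against the generic Weyl saving is the delicate bookkeeping that ultimately pins down the dependence $k=k(\epsilon,A,d)$.
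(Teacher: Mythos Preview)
Your overall architecture---Fourier expansion, Cauchy--Schwarz, reduction to averages of polynomial exponential sums $\sum_{n} e(P_{v,\widetilde{y}-\widetilde{y}'}(n))$---matches the paper's, and you correctly isolate the real difficulty in your final paragraph. But the proposed resolution is where the argument breaks. The claim that the ``bad'' differences (those whose associated polynomial is near integer-valued) meet $\widetilde{Y}-\widetilde{Y}$ in a sub-quadratic set is false in general: take $\widetilde{Y}\subset \tfrac{1}{q}\bZ^d\cap[0,1)^d$ for a single large $q$. Then \emph{every} difference is rational with denominator dividing $q$, so all $\binom{k}{2}$ pairs are ``bad'' simultaneously, and no geometric counting of near-hyperplane pairs gives a saving. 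Relatedly, fixing $N$ in terms of $(\epsilon,A)$ before seeing $\widetilde{Y}$ cannot work: for any finite $N$ one can choose irrational differences with Diophantine properties so poor that Weyl/van der Corput yields no cancellation on $[1,N]$.

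The paper avoids both issues by letting $N\to\infty$ and splitting pairs by whether $y-y'$ is rational or irrational. Irrational pairs contribute $0$ in the limit by qualitative polynomial Weyl equidistribution. For a rational pair with exact denominator $q$, one needs a bound of the shape $q^{-1/D+\delta}$; this is Hua's estimate for complete exponential sums, but it requires the numerator vector of polynomial coefficients to have gcd with $q$ bounded independently of the pair. Securing that uniform gcd bound is a separate algebraic step: the paper factors the $\bZ$-linear map $u\mapsto v\cdot(A(x)-A(0))u$ via Smith Normal Form as $T'_vR_v$ with $T'_v$ injective, replaces $\widetilde{Y}$ by $R_v\widetilde{Y}$ (losing only a bounded multiplicity), and then the gcd is bounded by the last elementary divisor. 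Finally, the contributions $h_q\cdot q^{-1/D+\delta}$ are summed over $q$ using Proposition~\ref{prop: h_q sum bound}, which is precisely the mechanism that converts ``many pairs share a small denominator'' into a sub-quadratic total---this is the step your affine-subspace heuristic was reaching for, but it lives in the denominator statistics of $Y-Y$, not in hyperplane incidence.
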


\begin{eg}[Proof of Proposition~\ref{prop: Glasner 2d}] Let $P_1(x), P_2(x) \in \bZ[x]$ be polynomials such that no non-trivial linear combination of them is constant and let  $$A(x) = \begin{bmatrix}
    P_1(x)   & 0 \\
    0  & P_2(x) \\
\end{bmatrix} $$ Now suppose $\widetilde{Y} \subset [0,1)^2$ is such that any two distinct $\widetilde{y}, \widetilde{y}'$ are not on a common vertical or horizontal line. This means that $(a_1, a_2) := \widetilde{y} - \widetilde{y}'$ satisfies that $a_1, a_2 \neq 0$. Now the expression (\ref{condition: Glasner difference set}) in Theorem~\ref{thm: main polynomial intro} is $$a_1v_1P_1(x) + a_2v_2 P_2(x) - a_1v_1P_1(0) - a_2v_2P_2(0)$$ where $v = (v_1,v_2) \in \bZ^2 \setminus \{(0,0)\}$. But $(a_1v_1, a_2v_2) \neq (0,0)$ and thus the linear combination $a_1v_1P_1(x) + a_2v_2 P_2(x)$ is a non-constant polynomial and so this expression is non-zero, thus Theorem~\ref{thm: main polynomial intro} applies.

\end{eg}

We remark that the $d=1$ case recovers the result of Berend-Peres \cite{Berend-Peres} (that was later improved quantitatively by Alon-Peres \cite{Alon-Peres}) on the Glasner property along polynomial sequences. More precisely, it states that if $P(x) \in \bZ[x]$ is non-constant then for all $\epsilon>0$ there is a constant $k = k(\epsilon, P(x))$ such that for subsets $Y \subset \bT$ with $|Y|>k$ we have that $P(n)Y$ is $\epsilon$-dense for some $n \in \bZ$.

\begin{eg}[Diagonal action] Consider now the diagonal action $\bN \curvearrowright \bT^2$ given by $n(x,y) = (ny,ny)$. Clearly this is not Glasner since the diagonal (or any non-dense subgroup of $\bT^2$) is an infinite invariant set and hence never becomes $\epsilon$ dense for small enough $\epsilon>0$. However, we may still apply Theorem~\ref{thm: main polynomial intro} to obtain natural assumptions on the set $Y \subset \bT^2$ so that $Y$ has $\epsilon$-dense images under the diagonal action. First, we let  $$A(x) = \begin{bmatrix}
    x   & 0 \\
    0  & x \\
\end{bmatrix} $$ The condition says that for any two distinct $\widetilde{y}, \widetilde{y}' \in \widetilde{Y}$, by setting $(a_1, a_2) := \widetilde{y} - \widetilde{y}'$ we must have $$(a_1v_1 + a_2v_2) x \neq 0 \quad \text{for all } (v_1,v_2) \in \bZ^2 \setminus\{ (0,0) \}.$$

This is equivalent to the statement that $a_1,a_2 \in \bR$ are linearly independent over $\bZ$.

\subsection{Acknowledgement} The authors were partially supported by by the Australian Research Council grant DP210100162.

\end{eg}

\section{Tools}

We now gather some useful tools that have mostly been used in previous works \cite{Kelly-Le}, \cite{Dong1} and \cite{BFGlasner} that are multidimensional generalizations of the techniques originally introduced by Alon-Peres. We restate them for the convenience of the reader, although one slightly new variation will be needed (see Lemma~\ref{lemma: BMV consequence}) mainly for the purposes of proving Theorem~\ref{thm: quantitative Glasner for Zariski-Connected}.

We start with a bound based on \cite{Alon-Peres} that has been extended by the aforementioned works. The following formulation can be found exactly in \cite{BFGlasner} (\cite{Dong1} only demonstrates and uses the $r \geq 1$ case).

\begin{prop} \label{prop: h_q sum bound} Fix an integer $d>0$ and any real number $r>0$. Then there exists a constant $C = C(d,r)$ such that the following is true: Given any distinct $x_1, \ldots, x_k \in \bT^d$ let $h_q$ denote the number of pairs $(i,j)$ with $1 \leq i,j \leq k$ such that $q$ is the minimal (if such exists) positive integer such that $q(x_i - x_j) = 0$. Then $$\sum_{q=2}^{\infty} h_q q^{-r} \leq C k^{2 - r/(d+1)}.$$

\end{prop}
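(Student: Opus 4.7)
The plan is to deduce the claimed weighted sum bound from a uniform pointwise estimate on the cumulative counts $H_Q := \sum_{q=2}^Q h_q$, via Abel summation. The key lemma to prove first is
\[ H_Q \le C_d \, k \min(k, Q^{d+1}). \]
To see this, I observe that $H_Q$ equals the number of ordered pairs $(i,j)$ with $i \ne j$ and $x_i - x_j$ a non-zero torsion point of order at most $Q$ in $\bT^d$; equivalently $x_j \in x_i + T_{\le Q}$ with $T_{\le Q} := \bigcup_{q=2}^Q T_q$, where $T_q := \tfrac{1}{q}\bZ^d/\bZ^d$. The crude union bound $|T_{\le Q}| \le \sum_{q=1}^Q q^d = O_d(Q^{d+1})$, together with distinctness of the $x_i$, shows that for each $i$ at most $\min(k-1, |T_{\le Q}|)$ indices $j$ can qualify; summing over $i$ gives the estimate.

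The next step is Abel summation: writing $h_q = H_q - H_{q-1}$ and rearranging, the boundary term at infinity vanishes because $H_N \le k^2$ and $r > 0$, and the differences $q^{-r} - (q+1)^{-r}$ are bounded by $r q^{-r-1}$, so
\[ \sum_{q=2}^\infty h_q q^{-r} \le r \sum_{q=2}^\infty H_q q^{-r-1}. \]
Inserting the lemma and splitting the $q$-sum at the balancing scale $Q_0 := k^{1/(d+1)}$ (where the two alternatives in $\min(k, Q^{d+1})$ cross over) produces two contributions. On $q \le Q_0$ one uses $H_q \le C_d k q^{d+1}$ to get a term of order $k \sum_{q \le Q_0} q^{d-r} \sim k \, Q_0^{d+1-r} = k^{2 - r/(d+1)}$ (for $r < d+1$); on $q > Q_0$ one uses $H_q \le k^2$ to get $k^2 \sum_{q > Q_0} q^{-r-1} \sim k^2 \, Q_0^{-r} = k^{2 - r/(d+1)}$. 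Summing the two ranges yields the proposition.

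The main substantive input is the elementary torsion-point count $|T_{\le Q}| = O_d(Q^{d+1})$; no deeper obstacle arises. The only genuine care needed is at borderline values of the exponent (for instance $r = d+1$, where a logarithm enters the small-$q$ sum), and these are harmlessly absorbed into the constant $C = C(d,r)$.
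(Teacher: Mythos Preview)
The paper does not give its own proof of this proposition; it is quoted from \cite{BFGlasner} (building on the Alon--Peres technique). Your route---bound the cumulative count $H_Q \le C_d\,k\min(k,Q^{d+1})$ via the elementary torsion-point estimate, then Abel-sum and split at $Q_0 = k^{1/(d+1)}$---is precisely the standard argument, and it is correct throughout the range $0 < r < d+1$. That range already covers every application in the paper, where the proposition is invoked only with small exponents ($r = 1/C$ in the proof of Theorem~\ref{thm: quantitative Glasner for Zariski-Connected} and $r = \tfrac{1}{D}-\delta$ in the proof of Theorem~\ref{thm: main polynomial intro}).

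The one genuine slip is your last sentence: at $r = d+1$ the logarithm coming from the small-$q$ range is \emph{not} harmlessly absorbed into $C(d,r)$, since your method then yields only $O(k\log k)$ against a claimed $O(k)$. This is not a repairable gap---the stated bound is in fact false for $r\ge d+1$. For a concrete witness, let $N=p_1\cdots p_m$ be a primorial, take the $x_i$ to be all $k=N^d$ points of $\tfrac{1}{N}\bZ^d/\bZ^d$, and compute directly that
\[
\sum_{q\ge 2} h_q\,q^{-(d+1)} \;=\; k\Bigl(\prod_{p\mid N}\bigl(1+\tfrac{1}{p}-\tfrac{1}{p^{d+1}}\bigr)-1\Bigr)\;\asymp\; k\log\log k
\]
by Mertens' theorem; for $r>d+1$ the same configuration gives a sum of exact order $k$, again exceeding $k^{2-r/(d+1)}$. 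So the proposition as literally stated should carry the hypothesis $0<r<d+1$; with that restriction your proof is complete, and the closing remark about borderline exponents should simply be dropped.
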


Throughout this paper, we let $e(t) = e^{2\pi i t}$ and we let $$B(M) = \{ \vec{m} \in \bZ^d ~|~ \vec{m} \neq \vec{0} \text{ and } \|\vec{m}\|_{\infty} \leq M \}$$ denote the $L^{\infty}$ ball of radius $M$ in $\bZ^d$ around $\vec{0}$ with $\vec{0}$ removed.

For $u \in \bT^d$ by $|u|$ we will mean the $\| \cdot \|_{\infty}$ distance from $u$ to the origin in $\bT^d$, which may precisely be defined as the distance from the origin in $\bR^d$ to the closest point in the lattice $(\pi_{\bT^d})^{-1}(u) \subset \bR^d$ (this is the metric that we use for $\bT^d$ when defining $\epsilon$-dense).

\begin{thm}[See Corollary 2 in \cite{Barton-Montgomery-Hugh-Valeer}]\label{thm: BMV bound} Let $0<\epsilon< \frac{1}{2}$, $M=\lceil \frac{d}{\epsilon} \rceil $ and $u_1, \ldots, u_k \in \bT^d$ with $|u_i| > \epsilon$ for all $i=1, \ldots, k$. Then  $$\frac{k}{3} \leq \sum_{\vec{m} \in B(M)} \left| \sum_{i=1}^k e(\vec{m} \cdot u_i) \right|.$$ 

\end{thm}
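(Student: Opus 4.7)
The plan is to derive the bound by Fourier duality using an extremal trigonometric polynomial. I would construct a trigonometric polynomial $P : \bT^d \to \bR$ whose Fourier support is contained in $\{\vec m \in \bZ^d : \|\vec m\|_\infty \leq M\}$ and which satisfies (a) $P(\vec u) \leq 0$ for every $\vec u \in \bT^d$ with $|\vec u| > \epsilon$; (b) $\hat P(\vec 0) > 0$; and (c) $|\hat P(\vec m)| \leq 3 \hat P(\vec 0)$ for every $\vec m \neq \vec 0$ in the support. Granted such a $P$, the hypothesis $|u_i| > \epsilon$ for all $i$ gives $\sum_{i=1}^k P(u_i) \leq 0$; expanding via the Fourier series yields
$$k \hat P(\vec 0) + \sum_{\vec m \in B(M)} \hat P(\vec m) \sum_{i=1}^k e(\vec m \cdot u_i) \leq 0,$$
and the triangle inequality then forces
$$k \hat P(\vec 0) \leq \sum_{\vec m \in B(M)} |\hat P(\vec m)| \left| \sum_{i=1}^k e(\vec m \cdot u_i) \right| \leq 3 \hat P(\vec 0) \sum_{\vec m \in B(M)} \left| \sum_{i=1}^k e(\vec m \cdot u_i) \right|,$$
which rearranges to the desired lower bound of $k/3$.

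To construct such a $P$, the basic building block is the one-dimensional Fej\'er kernel
$$F_M(t) = \frac{1}{M}\left(\frac{\sin(\pi M t)}{\sin(\pi t)}\right)^2 = \sum_{|m| < M} \left(1 - \frac{|m|}{M}\right) e(mt),$$
which is nonnegative, has Fourier support in $(-M, M)$ with all Fourier coefficients of modulus at most $1$, and satisfies the pointwise bound $F_M(t) \leq 1/(4 M t^2)$ for $|t| \leq 1/2$. The choice $M = \lceil d/\epsilon \rceil$ is calibrated precisely so that $F_M(t) \leq F_M(0)/(4 d^2)$ whenever $|t| > \epsilon$. In $d$ variables, rather than taking the naive product, I would use a more carefully chosen Beurling--Selberg or Vaaler-type minorant of the indicator of the small cube $[-\epsilon, \epsilon]^d$ of degree at most $M$ in each coordinate, arranged so that the output is automatically nonpositive outside that cube while keeping a positive mean and a well-controlled coefficient profile.

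The hardest step is building an extremal $P$ satisfying all of (a)--(c) simultaneously. The most transparent candidate, the product Fej\'er kernel $\prod_j F_M(u_j)$, is too strongly peaked in high dimension --- its pointwise peak $M^d$ vastly exceeds its integral of $1$ --- so that after subtracting a constant to enforce (a) the coefficient $\hat P(\vec 0)$ becomes very negative, ruining (b). This forces one into a genuine minorant-type construction, trading some peak sharpness for a flatter Fourier profile in the style of Selberg, Beurling and Vaaler. The explicit constants $3$ (hidden inside the bound $k/3$) and $d/\epsilon$ reflect careful optimization of this multidimensional extremal problem, which is exactly the content of the cited result of Barton--Montgomery--Vaaler.
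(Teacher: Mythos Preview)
The paper does not prove this theorem; it merely quotes it with a reference to Corollary~2 of Barton--Montgomery--Vaaler and then uses it as a black box in the proof of Lemma~\ref{lemma: BMV consequence}. Your outline is a faithful sketch of the strategy actually carried out in that cited paper: one reduces by Fourier duality to constructing a trigonometric polynomial $P$ with the three properties (a)--(c), and the substantive work lies in building the multidimensional Selberg--Vaaler minorant with the correct constants. You correctly identify why the naive product Fej\'er kernel fails and why a genuine extremal-function construction is needed, and you appropriately defer that construction to the cited source---which is exactly what the present paper does.
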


The following is a more relaxed version of Proposition 2 in \cite{Kelly-Le} which we will need for both Theorem~\ref{thm: quantitative Glasner for Zariski-Connected} and Theorem~\ref{thm: Glasner affine subspaces}. It is purely finitistic, rather than asymptotic, which will allow us to take averages with respect to random walks rather than just Ces\`aro averages.

\begin{lemma}\label{lemma: BMV consequence} For integers $d>0$ there exists a constant $C_1=C_1(d)>0$ such that the following is true. Suppose that $g \in M_{d \times d}(\bZ)$ and $x_1,\ldots, x_k$ satisfy that $\{g x_1, \ldots, g x_k\}$ is not $\epsilon$-dense. Then for $M = \lceil \frac{d}{\epsilon} \rceil$ we have $$k^2 < C_1 \epsilon^{-d} \sum_{\vec{m} \in B(M)} \sum_{i,j=1}^k e(\vec{m} \cdot g(x_i - x_j)).$$

\end{lemma}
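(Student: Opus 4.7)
The plan is to derive this bound as a straightforward consequence of Theorem~\ref{thm: BMV bound} combined with the Cauchy--Schwarz inequality. First, since $\{gx_1, \ldots, gx_k\}$ is not $\epsilon$-dense in $\bT^d$, there exists a point $u_0 \in \bT^d$ with $|gx_i - u_0| > \epsilon$ for every $i$. Setting $u_i := g x_i - u_0 \in \bT^d$, the points $u_1, \ldots, u_k$ satisfy $|u_i| > \epsilon$, so Theorem~\ref{thm: BMV bound} applies and gives
$$\frac{k}{3} \leq \sum_{\vec{m} \in B(M)} \left| \sum_{i=1}^k e(\vec{m} \cdot u_i) \right| = \sum_{\vec{m} \in B(M)} \left| \sum_{i=1}^k e(\vec{m} \cdot g x_i) \right|,$$
where the equality follows because $e(-\vec{m} \cdot u_0)$ has modulus one and can be factored outside of the absolute value.

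To convert this linear bound into the quadratic form that the lemma demands, I would square both sides and apply Cauchy--Schwarz:
$$\frac{k^2}{9} \leq \left( \sum_{\vec{m} \in B(M)} \left| \sum_{i=1}^k e(\vec{m} \cdot g x_i) \right| \right)^2 \leq |B(M)| \cdot \sum_{\vec{m} \in B(M)} \left| \sum_{i=1}^k e(\vec{m} \cdot g x_i) \right|^2.$$
Expanding the inner modulus squared yields the nonnegative real quantity $\sum_{i,j=1}^k e(\vec{m} \cdot g(x_i - x_j))$, so the absolute value bars can be dropped. Finally, since $M = \lceil d/\epsilon \rceil$ one has $|B(M)| \leq (2M+1)^d \leq (2d/\epsilon + 3)^d \leq C \epsilon^{-d}$ for some constant $C = C(d)$ (for $\epsilon$ below a fixed threshold, which is the only interesting regime). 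Combining these bounds and setting $C_1 := 9C$ produces the claimed inequality.

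There is no serious obstacle in this argument: it is essentially the standard Fourier-plus-Cauchy--Schwarz manoeuvre underlying the earlier Glasner-type bounds, and it is simpler here because one only needs a pointwise finitistic estimate rather than an asymptotic average. The sole technical subtlety is that the strict inequality in the conclusion rests on the strict inequality $|u_i| > \epsilon$ required by Theorem~\ref{thm: BMV bound}; should the chosen convention on ``$\epsilon$-dense'' give only $|gx_i - u_0| \geq \epsilon$, one replaces $\epsilon$ with $\epsilon/2$ and absorbs the loss into $C_1$. The key feature of this formulation (and its advantage over Proposition~2 of \cite{Kelly-Le}) is that no averaging over the group element $g$ has yet been performed, so the bound can later be integrated against arbitrary probability measures on the acting semigroup --- in particular random walk measures, as will be needed in the proof of Theorem~\ref{thm: quantitative Glasner for Zariski-Connected}.
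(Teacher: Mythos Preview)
Your proof is correct and follows essentially the same route as the paper's: pick a point witnessing the failure of $\epsilon$-density, apply Theorem~\ref{thm: BMV bound} to the translated points, square via Cauchy--Schwarz, expand the modulus squared, and bound $|B(M)|$ by $C(d)\epsilon^{-d}$. Your caveat about needing $\epsilon$ below a threshold is unnecessary, since for $\epsilon<\tfrac12$ (already required by Theorem~\ref{thm: BMV bound}) one has $2M+1\leq 5d/\epsilon$ and hence $|B(M)|\leq (5d)^d\epsilon^{-d}$ outright.
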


\begin{proof} Not being $\epsilon$-dense means that there exists $\alpha \in \bT^d$ such that $|\alpha - g x_i |> \epsilon$ for all $i = 1, \ldots k$. Using Theorem~\ref{thm: BMV bound} with $u_i =  g(\alpha - x_i)$ and applying Cauchy-Schwartz we get $$\frac{k^2}{9} \leq |B(M)| \sum_{\vec{m} \in B(M)} \left|  \sum_{i=1}^k e(\vec{m} \cdot(\alpha - g x_i)) \right|^2.$$ Now expanding this square and using the estimate $|B(M)| = (2M+1)^d - 1 = O(2^dd^d\epsilon^{-d})$ gives the result.\end{proof}

\section{Proof of the Glasner property in the case of Zariski-connected Zariski-closures}

Now let $G \leq \operatorname{SL}_d(\bZ)$ be a subgroup with Zariski-connected Zariski closure such that the action of $G$ on $\bR^d$ is irreducible and let $\mu$ be a probability measure on $G$ with finite mean such that $\mu(\{g \}) >0$ for all $g \in G$. Our main tool is the following powerful result on the equidistribution of random linear walks on $\bT^d$ that extends the deep work of Bourgain-Furman-Lindenstrauss-Mozes \cite{BFLM}.

\begin{thm}[See Theorem 1.2 in \cite{HeSaxce2022}]\label{thm: he-saxce} There exists a $\lambda>0$ and a constant $C>0$ such that for every $x \in \bT^d$ and $0<t<\frac{1}{2}$, if $a \in \bZ^d \setminus \{0\}$ is such that $$|\widehat{\mu^{*n} \ast \delta_x}(a)| \geq t \quad \text{and } n \geq C \log \frac{\|a\|}{t}$$ then there exists a $q \in \bZ_{>0}$ and $x' \in \frac{1}{q}\bZ^d/\bZ^d$ such that $$q < \left( \frac{\|a\|}{t} \right)^C \quad \text{ and } d(x,x') \leq e^{-\lambda n}.$$
 
\end{thm}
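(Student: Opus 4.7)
The plan is to follow the Bourgain--Furman--Lindenstrauss--Mozes (BFLM) Fourier framework, adapted to the weaker Zariski-connected hypothesis. The starting point is the identity
\[
\widehat{\mu^{*n} \ast \delta_x}(a) = \bE_{g \sim \mu^{*n}} e\bigl(-(g^{\top} a)\cdot x\bigr),
\]
which recasts the Fourier coefficient as an exponential sum over the transposed random walk applied to $a$ and paired with $x$. The goal is therefore to show that if this sum has absolute value at least $t$, then $x$ must lie within $e^{-\lambda n}$ of a rational point of denominator at most $(\|a\|/t)^{C}$.

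First, I would analyze the transposed random walk $a \mapsto g^{\top} a$ on $\bZ^d \setminus \{0\}$. Under the irreducibility and Zariski-connectedness assumptions, Benoist--Quint and Le Page theory provide a spectral gap for the induced walk on projective space, so the norm of $g^{\top} a$ grows exponentially at the top Lyapunov exponent, and its direction equidistributes in $\bP^{d-1}(\bR)$ at an explicit exponential rate. In particular, for typical realisations the vectors $g^{\top} a$ become "generic" in $\bZ^d$ as $n$ grows, and their collective span covers many linearly independent directions.

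Second, I would run a large-deviations / Diophantine step. Expanding the square of the exponential sum (or working with the differences $(g_1^{\top} - g_2^{\top}) a$) shows that if $|\widehat{\mu^{*n} \ast \delta_x}(a)| \geq t$, then a random-walk event of probability at least $t^2$ has the phases $(g^{\top}a)\cdot x$ clustered in a short arc modulo $\bZ$. Translating back to $\bZ^d$, this forces an exponentially large family of such differences to lie inside a thin neighbourhood of the hyperplane $\{v \in \bR^d : v\cdot x \in \bZ\}$. Combining this cluster with the equidistribution of directions from the first step, a Minkowski / pigeonhole argument produces a non-zero $v \in \bZ^d$ with $\|v\| < (\|a\|/t)^{C}$ for which $v \cdot x$ is $e^{-\lambda n}$-close to $\bZ$. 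Iterating in enough independent directions (here irreducibility is essential) yields the rational approximation $x'$ claimed in the theorem.

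The hard part is the third step: upgrading approximate phase coincidences into honest rationality of $x$ with a polynomial-size denominator. In the classical BFLM setting this rests on Bourgain's discretised sum--product theorem together with $p$-adic arguments tailored to $\operatorname{SL}_d(\bZ)$. In the He--de Saxc\'e extension, the sum--product input must be replaced by effective equidistribution of the random walk on the semisimple Zariski closure together with a structural analysis of possible invariant affine subspaces. The Zariski-connectedness hypothesis enters crucially here, ruling out finite-index components that could support exotic invariant structures incompatible with the desired rational approximation.
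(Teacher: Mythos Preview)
This theorem is not proved in the paper at all: it is quoted verbatim from He--de Saxc\'e \cite{HeSaxce2022} (the label even reads ``See Theorem 1.2 in \cite{HeSaxce2022}'') and used purely as a black box to derive Lemma~\ref{lemma: HS consequence}. There is therefore no ``paper's own proof'' to compare your proposal against. The authors' contribution begins only at the short contrapositive argument in Lemma~\ref{lemma: HS consequence} and the subsequent application in the proof of Theorem~\ref{thm: quantitative Glasner for Zariski-Connected}.

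As for your sketch on its own terms: the opening identity and the overall three-step architecture (growth/equidistribution of $g^{\top}a$, a large-deviations clustering step, and a Diophantine upgrade) do match the BFLM strategy and its He--de Saxc\'e extension in spirit. But what you have written is a narrative outline, not a proof. The second step as stated only yields a \emph{single} lattice vector $v$ with $v\cdot x$ close to $\bZ$; getting $d$ such vectors that are genuinely linearly independent with controlled covolume is where the real difficulty lies, and ``iterating in enough independent directions'' hides the entire multiscale/bootstrap machinery. Likewise, your third step correctly identifies that the discretised sum--product input must be replaced in the Zariski-connected setting, but you have not said \emph{what} replaces it --- in \cite{HeSaxce2022} this is a substantial body of work involving effective spectral gap estimates on the Zariski closure and a discretised projection theorem, none of which can be summarised away. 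If your intention was to reprove the cited theorem, you would need to engage with those ingredients explicitly; if your intention was to supply the proof the present paper omits, then no proof is needed because none is claimed.
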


Letting $n \to \infty$ and taking contrapositives, we obtain the following simple corollary.

\begin{lemma} \label{lemma: HS consequence} There exists a constant $C>0$ such that for every $x \in \bQ^d/\bZ^d$ of the form $x = \frac{1}{q}v$, where $v \in \bZ^d$ and $\operatorname{gcd}(v,q) = 1$, and every $a \in \bZ^d \setminus \{0\}$ we have that $$\limsup_{n \to \infty} |\widehat{\mu^{*n} \ast \delta_x}(a)| \leq 2\|a \| q^{-1/C}.$$ Furthermore, if $y \in \bT^d$ is irrational then $$\limsup_{n \to \infty} |\widehat{\mu^{*n} \ast \delta_y}(a)| = 0.$$

\end{lemma}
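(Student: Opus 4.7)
The plan is to prove this as a direct contrapositive of Theorem~\ref{thm: he-saxce}. The intuition is that if $|\widehat{\mu^{*n}\ast\delta_x}(a)|$ stays above some threshold $t$ along an unbounded sequence of $n$, then the theorem forces $x$ to lie within $e^{-\lambda n}$ of a rational point of denominator at most $(\|a\|/t)^C$. But only finitely many such rationals exist, and for the $x$ (or $y$) at hand each has a fixed positive distance from $x$; since $e^{-\lambda n} \to 0$, this is eventually impossible.

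For the first assertion, I would assume (as we may, since otherwise the bound is trivial from $|\widehat{\mu^{*n}\ast\delta_x}(a)| \leq 1$) that $2\|a\|q^{-1/C} < \tfrac12$, and argue by contradiction: suppose $\limsup_n |\widehat{\mu^{*n}\ast\delta_x}(a)| > 2\|a\|q^{-1/C}$. Then one can pick $t$ with $2\|a\|q^{-1/C} < t < \tfrac12$ and an infinite subsequence $n_k \to \infty$ along which $|\widehat{\mu^{*n_k}\ast\delta_x}(a)| \geq t$. Since $t > 2\|a\|q^{-1/C}$, the bound $(\|a\|/t)^C < q/2^C < q$ holds, so Theorem~\ref{thm: he-saxce} furnishes positive integers $q'_k < q$ and points $x'_k \in \tfrac{1}{q'_k}\bZ^d/\bZ^d$ with $d(x,x'_k) \leq e^{-\lambda n_k}$. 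The crucial observation is that the coprimality hypothesis $\gcd(v,q) = 1$ (meaning $\gcd(v_1,\ldots,v_d,q) = 1$) forces $x \notin \tfrac{1}{q'}\bZ^d/\bZ^d$ for every positive integer $q' < q$; indeed $v/q \equiv v'/q'$ mod $\bZ^d$ would imply that $q/\gcd(q,q')$ divides each entry of $v$ as well as $q$, hence divides $\gcd(v_1,\ldots,v_d,q) = 1$, forcing $q \mid q'$ and thus $q' \geq q$. Consequently the finite set $S := \bigcup_{q'=1}^{q-1}\tfrac{1}{q'}\bZ^d/\bZ^d$ has strictly positive distance $\delta := d(x,S)$ from $x$, and choosing $k$ with $e^{-\lambda n_k} < \delta$ contradicts $d(x,x'_k) \leq e^{-\lambda n_k}$.

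The irrational case follows by the same contradiction scheme applied to every $t \in (0,\tfrac12)$: the set $\bigcup_{q' < (\|a\|/t)^C}\tfrac{1}{q'}\bZ^d/\bZ^d$ of rationals of denominator below $(\|a\|/t)^C$ is finite, hence has strictly positive distance from the irrational $y$, and the same contradiction yields $\limsup_n |\widehat{\mu^{*n}\ast\delta_y}(a)| \leq t$. Letting $t \to 0$ gives the claimed vanishing. No step is really delicate; the proof is essentially just unwinding the quantifiers in Theorem~\ref{thm: he-saxce} together with the elementary observation that $x$ (or $y$) stays a positive distance from any finite set of rationals of bounded denominator.
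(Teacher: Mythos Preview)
Your argument is essentially the paper's: both take the contrapositive of Theorem~\ref{thm: he-saxce}, observing that $x$ lies a positive distance from the finitely many rationals of denominator $< q$. One small slip: triviality from $|\widehat{\nu}(a)| \leq 1$ only covers the case $2\|a\|q^{-1/C} \geq 1$, not $\geq \tfrac12$ as you wrote, so the range $\tfrac12 \leq 2\|a\|q^{-1/C} < 1$ is not handled by your case split. The paper avoids this by taking $t = \|a\|q^{-1/C}$ directly (so the non-trivial regime is exactly $t < \tfrac12$, i.e.\ $2\|a\|q^{-1/C} < 1$) and proving the sharper bound $\limsup \leq t$; your argument goes through verbatim with this choice of $t$, since $(\|a\|/t)^C = q$ already yields $q' < q$ and the extra factor of $2^C$ is unnecessary. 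Your treatments of the coprimality step and of the irrational case are in fact more explicit than the paper's, which leaves both to the reader.
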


\begin{proof} Let $t = \|a \|q^{-1/C}$. If $t \geq \frac{1}{2}$ then the result is clearly true since $|\widehat{\nu} (a)| \leq 1$ for any probability measure $\nu$ on $\bT^d$. On the other hand, if $0<t < \frac{1}{2}$ then we may apply Theorem~\ref{thm: he-saxce} and proceed by contradition to show the sharper (by a factor of $\frac{1}{2}$) bound $$\limsup_{n \to \infty} |\widehat{\mu^{*n} \ast \delta_x}(a)| \leq t.$$ More precisely, if this bound were to fail then we can find large enough $n \geq  C \log \frac{\|a\|}{t}$ so that $|\widehat{\mu^{*n} \ast \delta_x}(a)| \geq t$ and thus there exists $$q' <  \left( \frac{\|a\|}{t} \right)^C = q$$ (meaning that $q' \neq q$) such that $x' \in \frac{1}{q'}\bZ^d/\bZ^d$ such that $d(x,x') < e^{-\lambda n}$. For sufficiently large $n$, this leads to a contradiction as $q \neq q'$.\end{proof}

Intuitively, this can be interpreted as saying that an irrational orbit equidistributes to the Haar measure while the orbit of a rational point with large enough denominator almost equidistributes to the Haar measure. We remark that the proof for $G=\operatorname{SL}_d(\bZ)$ given by Dong in \cite{Dong1} instead used an explicit calculation (Ramanujan sum) for this convolution in the rational case and used the work of Benoist-Quint \cite{Benoist-Quint} for the irrational case.

\begin{proof}[Proof of Theorem~\ref{thm: quantitative Glasner for Zariski-Connected}] Suppose for contradiction that $x_1,\ldots x_k \in \bT^d$ are distinct points such that $\{ g x_1, \ldots, g x_k \}$ is not $\epsilon$-dense in $\bT^d$ for all $g \in G$. Using Lemma~\ref{lemma: BMV consequence}, for $M=\lceil \frac{d}{\epsilon} \rceil $ we have $$k^2 < C_1 \epsilon^{-d} \sum_{\vec{m} \in B(M)} \sum_{i,j=1}^k e(\vec{m} \cdot g(x_i - x_j)) \quad \text{for all } g \in G.$$

Now let $\mu$ be the probability measure on $G$ as above. Integrating this estimate respect to the $n$-fold convolution $\mu^{*n}$ we obtain \begin{align*} k^2 &<  C_1 \epsilon^{-d} \sum_{\vec{m} \in B(M)} \int_{G} \sum_{i,j=1}^k e(\vec{m} \cdot g(x_i - x_j)) d(\mu^{*n})(g) \\ &= C_1 \epsilon^{-d} \sum_{\vec{m} \in B(M)} \sum_{i,j=1}^k \widehat{\mu^{*n} * \delta_{x_i - x_j}}(\vec{m}). \end{align*}

Now using Lemma~\ref{lemma: HS consequence} and letting $n \to \infty$, we get that $$k^2 <  C_1 \epsilon^{-d} \sum_{\vec{m} \in B(M)} \sum_{q=1}^{\infty} h_q \cdot 2 \|\vec{m} \| q^{-1/C} + C_1\epsilon^{-d} k |B(M)|,$$

where $h_q$ denotes the number of pairs $x_i, x_j$ such that $q$ is the least positive integer for which $q(x_i - x_j) = 0$. We apply Proposition~\ref{prop: h_q sum bound} to obtain that
\begin{align*} k^2 &<  2C_1 \epsilon^{-d} \sum_{\vec{m} \in B(M)} C_2k^{2-\frac{1}{C(d+1)}} \|\vec{m} \| + C_1\epsilon^{-d} k |B(M)| \\ &\leq C \epsilon^{-3d} k^{2-\frac{1}{C(d+1)}} +  C\epsilon^{-2d}k \end{align*} for a large enough constant $C$ that depends on $d$ and $G$. Thus, for large enough $k \geq \epsilon^{-C_G}$ for some constant $C_G>0$ this inequality must fail, contradicting the initial assumption that for some distinct $x_1, \ldots, x_k \in \bT^d$ the set $\{ g x_1, \ldots, g x_k \}$ is not $\epsilon$-dense in $\bT^d$ for all $g \in G$.  \end{proof}

\section{Proof of main polynomial theorem}

\begin{lemma}[GCD bound lemma] Let $T_0:\bZ^d \to \bZ^r$ be a $\bZ$-linear transformation. Then there exists a constant $Q=Q(T_0)>0$ and a surjective $\bZ$-linear map $R:\bZ^d \to W$ (where $W \cong \bZ^{d'}$ is an abelian group) such that $T_0 = TR$ for some injective $\bZ$-linear map $T:W \to \bZ^r$ and such that for all $q \in \bZ_{\geq 0}$ we have that $$\operatorname{gcd}(Tw,q) \leq Q \quad \text{for all } w \in W \text{ with } \operatorname{gcd}(w,q)=1.$$

\end{lemma}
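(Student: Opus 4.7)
The plan is to invoke the Smith normal form of the integer matrix representing $T_0$, which produces exactly the factorization required. Viewing $T_0$ as a matrix in $M_{r \times d}(\bZ)$, I would write $T_0 = UDV$ with $U \in \operatorname{GL}_r(\bZ)$, $V \in \operatorname{GL}_d(\bZ)$, and $D$ an $r \times d$ matrix whose top-left $s \times s$ block is $\operatorname{diag}(a_1, \ldots, a_s)$ with $a_1 \mid a_2 \mid \cdots \mid a_s$ the invariant factors ($s = \operatorname{rank} T_0$), all other entries zero. Setting $W = \bZ^s$, I define $R : \bZ^d \to W$ by projecting $Vv$ onto its first $s$ coordinates, and $T : W \to \bZ^r$ by $T(w_1, \ldots, w_s) = U(a_1 w_1, \ldots, a_s w_s, 0, \ldots, 0)^{T}$. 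Surjectivity of $R$ follows from $V \in \operatorname{GL}_d(\bZ)$, and injectivity of $T$ from the $a_i$ being nonzero together with $U \in \operatorname{GL}_r(\bZ)$; a direct matrix computation confirms $TR = T_0$.

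The candidate constant is simply $Q := a_s$, the largest invariant factor. The first small lemma I would verify is that multiplication by an element of $\operatorname{GL}_r(\bZ)$ preserves $\gcd$ with any integer: if $U \in \operatorname{GL}_r(\bZ)$ and $v \in \bZ^r$, then $\gcd(Uv, q) = \gcd(v, q)$, since each entry of $Uv$ is an integer combination of entries of $v$ (giving one divisibility) and likewise via $U^{-1}$ (giving the reverse). Applying this to the definition of $T$ reduces the problem to showing
\[
\gcd(a_1 w_1, \ldots, a_s w_s, q) \leq a_s \qquad \text{whenever } \gcd(w_1, \ldots, w_s, q) = 1.
\]

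To finish, I would argue prime by prime. Let $g = \gcd(a_1 w_1, \ldots, a_s w_s, q)$, fix a prime $p \mid g$, and set $k = v_p(g)$. For every index $i$, $p^k \mid a_i w_i$. Because $\gcd(w_1, \ldots, w_s, q) = 1$ while $p \mid q$, there must exist some index $i_0$ with $p \nmid w_{i_0}$; for that index $p^k \mid a_{i_0}$, and since $a_{i_0} \mid a_s$ in the invariant-factor chain, $p^k \mid a_s$. Summing these prime-local divisibilities yields $g \mid a_s$, so $g \leq a_s = Q$, as required.

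There is no real obstacle here: the argument is essentially Smith normal form plus a short local divisibility check. The only point that needs care is exploiting the \emph{chain} $a_1 \mid \cdots \mid a_s$: without it one would only get $g \mid \operatorname{lcm}(a_1, \ldots, a_s)$ per prime and a weaker (still uniform) bound, but the nesting makes $a_s$ alone suffice, giving the cleanest constant $Q = Q(T_0)$.
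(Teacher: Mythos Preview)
Your proof is correct and follows essentially the same approach as the paper: both invoke the Smith normal form to produce the factorization $T_0 = TR$ and take $Q$ to be the largest nonzero invariant factor. Your treatment is in fact a bit more detailed, supplying the prime-by-prime argument for the inequality $\gcd(a_1 w_1, \ldots, a_s w_s, q) \leq a_s$ that the paper simply asserts.
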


\begin{proof} By the Smith Normal Form we may write $$T_0 = LDR'$$ where $L:\bZ^r \to \bZ^r$, $R':\bZ^d \to \bZ^d$ are automorphisms and $D:\bZ^d \to \bZ^r$ is diagonal. This means that $De_i = D_ie_i'$ where $e_i \in \bZ^d$ and $e_i' \in \bZ^r$ is the $i$-th standard basis vector and $D_i \in \bZ$. We also have the divisibility conditions $D_1|D_2|\cdots | D_d$. Now suppose that $k$ is maximal such that $D_k \neq 0$ (thus $D_i = 0$ for all $i > k$). We let $W = \bZ\text{-span}\{e_1, \ldots, e_k\}$. We let $R=P_WR'$ where $P_W:\bZ^d \to W$ is the orthogonal projection and we let $T = (LD)|_{W}:W \to \bZ^r$ be the restriction of $LD$ to $W$. It follows that $$T_0=TR$$ and that $T$ is injective while $R$ is surjective. Indeed, for $a \in \bZ^d$ we write $R'a = w+u$ where $w \in W$ and $u \in \bZ\text{-span}\{e_{i+1}, \ldots, e_d\}$, thus $$DR'a = Dw = DP_WR'a = DRa.$$ Moreover, we see $T$ is injective since $L$ is an automorphism and $D|_W$ is injective.

Now fix $q \in \bZ_{>0}$ and $w \in W$ such that $\operatorname{gcd}(w,q) = 1$. We see that $\operatorname{gcd}(Dw,q) \leq D_k$. Now since $L$ is an automorphism we have that $$\operatorname{gcd}(RDw,q) = \operatorname{gcd}(Dw,q) \leq Q$$ where we set $Q = D_k$. \end{proof}

\begin{proof}[Proof of Theorem~\ref{thm: main polynomial intro}] Suppose $\widetilde{Y} = \{x_1, \ldots, x_k\}$ where the $x_i$ are distinct and suppose that $A(n)Y$ is not $\epsilon$-dense in $\bT^d$ for all $n \in \bZ$ (where $Y = \pi_{\bT^d}(\widetilde{Y})$). So we can apply Lemma~\ref{lemma: BMV consequence} to all such $g \in \{(A(1), \ldots, A(N)\}$ and average over $n=1, \ldots, N$ to obtain that $$k^2 \leq \frac{C_1}{\epsilon^d}\sum_{ \vec{m} \in B(M)} \sum_{1 \leq i,j \leq k} \frac{1}{N} \sum_{n=1}^N e(\vec{m} \cdot A(n) (x_i - x_j)).  $$ Now for each $\vec{m} \in B(M)$ we have a linear map $T_{\vec{m}}:\bR^d \to \bR[x]$ given by $$T_{\vec{m}}u = \vec{m} \cdot (A(x) - A(0))u.$$ Observe that $T_{\vec{m}}$ maps $\bZ^d$ to $\bZ[x]$ and in fact the image of $T_{\vec{m}}$ is isomorphic (as an abelian group) to $\bZ^r$ for some $r \leq D$ where $D$ is the degree of $A(x)$. Using GCD bound lemma above we may write $T_{\vec{m}} = T'_{\vec{m}}R_{\vec{m}}$ where $T'_{\vec{m}}:\bZ^{d'} \to \bZ^d$ is an injective linear map for some $d'\leq d$ and $R_{\vec{m}}:\bZ^d \to \bZ^{d'}$ is surjective and linear. We may also view these maps as integer matrices and thus as linear maps between Euclidean spaces or between Tori. By assumption, we have that $T_{\vec{m}}(x_i - x_j) \in \bR[x]$ is non-zero for distinct $i,j$. Thus $R_{\vec{m}}$ must be injective on $\widetilde{Y}$ hence $|\widetilde{Y}_{\vec{m}}| = k$ where we define $$\widetilde{Y}_{\vec{m}} = R_{\vec{m}} \widetilde{Y} \subset \bR^{d'}.$$ 

Now observe that since there are only finitely many $\vec{m}$ (we consider $\epsilon$ as fixed and $B(M)$ is a finite set) there must exist a constant $L$ such that $$R_{\vec{m}}([0,1)^d) \subset [0,L)^{d'}$$ for all $\vec{m} \in B(M)$. This means that if we set $Y_{\vec{m}} = \pi_{\bT^{d'}} (\widetilde{Y}_{\vec{m}})$ then we must have $$|Y_{\vec{m}}| \geq \frac{|\widetilde{Y}_m|}{L^{d'}} \geq k L^{-d}.$$

Thus we can rewrite our bound as 

\begin{align*} k^2 &\leq \frac{C_1}{\epsilon^d}\sum_{ \vec{m} \in B(M)} \sum_{\widetilde{y},\widetilde{y}' \in \widetilde{Y}_{\vec{m}}} \frac{1}{N} \sum_{n=1}^N e\left((T'_{\vec{m}} (\widetilde{y}-\widetilde{y}')) (n) + \vec{m} \cdot A(0)(\widetilde{y}-\widetilde{y}') \right) \\
&\leq  \frac{C_1}{\epsilon^d} L^{2d} \sum_{ \vec{m} \in B(M)} \sum_{y, y' \in Y_{\vec{m}}} \frac{1}{N} \sum_{n=1}^N e\left((T'_{\vec{m}} (y-y')) (n) + \vec{m} \cdot A(0)(y-y') \right) \end{align*} where the extra $L^{2d}$ factor comes from the fact that a pair $y,y' \in Y_{\vec{m}}$ arises as the projection of at most $L^d L^d$ pairs $\widetilde{y}, \widetilde{y}' \in \widetilde{Y}_{\vec{m}}$. 

Now we consider two cases.

\textbf{Case 1:} $y-y'$ is not rational, i.e., $y-y' \notin \bQ^{d'}/\bZ^{d'}$. We claim that $T'_{\vec{m}}(y-y')(x) \in \bT[x]$ has an irrational non-constant term (the constant term is zero). This follows from basic Linear Algebra: If $A$ is a matrix with entries in $\bQ$ and with trivial kernel then a solution to $Ax = u$, with $u$ a rational vector, must be rational. Thus if  $T'_{\vec{m}}(y-y')(x) \in (\bQ/\bZ)[x]$ then $y-y' \in \bQ^{d'}/\bZ^{d'}$, a contradiction. It now follows by the polynomial Weyl Equidistribution theorem that $$ \lim_{N \to \infty} \frac{1}{N} \sum_{n=1}^N e\left((T'_{\vec{m}} (y-y')) (n) + \vec{m} \cdot A(0)(y-y') \right) = 0.$$

\textbf{Case 2:} $y-y' \in \bQ^{d'}/\bZ^{d'}$. We thus write $y-y' = \frac{w}{q}$ where $w \in \bZ^d$ and $\operatorname{gcd}(w, q) = 1$. We now use the GCD bound lemma to see that $$T'_{\vec{m}}(y-y')(n) = \frac{1}{q}\sum_{j=1}^r b_j n^j$$ where $\operatorname{gcd}(b_1, \ldots, b_r, q) \leq Q(\vec{m})$ for some constant $Q(\vec{m})$ as in the GCD bound lemma. Thus we may apply Hua's bound (see \cite{Hua} or \cite{HuaBook}) to obtain a constant $C_2 = C_2(D, \delta)$ depending only on $D$ and any constant $0 < \delta < \frac{1}{D}$ such that 

\begin{align*} \left | \lim_{N \to \infty} \frac{1}{N}\sum_{n=1}^N e\left((T'_{\vec{m}} (y-y')) (n) + \vec{m} \cdot A(0)(y-y') \right) \right | &= \left| \frac{1}{q}\sum_{n=1}^q e\left(\frac{1}{q}\sum_{j=1}^r b_j n^j + \vec{m} \cdot A(0)(y-y') \right) \right| \\ &\leq C_{2}  \left(\frac{Q(\vec{m})}{q} \right)^{\frac{1}{D} - \delta}.   \end{align*}

Let $Q = \max_{\vec{m} \in B(M)} Q(\vec{m})$. Also, let $h_{q,\vec{m}}$ denote the number of pairs $y,y' \in Y_{\vec{m}}$ such that $y-y' =  \frac{w}{q}$ where $w \in \bZ^d$ and $\operatorname{gcd}(w, q) = 1$. In other words, $h_{q,\vec{m}}$ is the number of pairs $y,y' \in Y_{\vec{m}}$ such that $q$ is the least positive integer for which $q(y-y')=0$. Letting $N \to \infty$ and combining the two cases above we obtain the bound

\begin{align*} k^2 & \leq \frac{C_1 L^{2d}}{\epsilon^d}\sum_{ \vec{m} \in B(M)} \left( \sum_{q=2}^{\infty} h_{q,\vec{m}} C_{2}  \left(\frac{Q}{q} \right)^{\frac{1}{D} - \delta} +k \right) \end{align*}

Now apply Proposition~\ref{prop: h_q sum bound} to get that $$\sum_{q=2}^{\infty} h_{q,\vec{m}} q^{\delta - \frac{1}{D}} \leq C_3 k^{2 - (\frac{1}{D} - \delta)/(d+1)}$$ for some constant $C_3 = C_3(d,D)$ depending only on $d$ and $D$. Thus we have shown that 

$$ k^2 \leq Q^{\frac{1}{D} - \delta}C_2 (2M)^d \frac{C_1L^{2d}}{\epsilon^d}C_3 k^{2 - (\frac{1}{D} - \delta)/(d+1)} + \frac{C_1 L^{2d}}{\epsilon^d}(2M)^d k .$$

Observe that as $\epsilon$, $A(x)$ and $d$ are fixed, we have that $M$, $Q$ and $L$ are fixed and so for large enough $k$ this inequality must fail. In other words, if $|Y|$ is larger than some function of $\epsilon$, $A(x)$ and $d$ then there must exist $n \in \bZ_{\geq 0}$ such that $A(n)Y$ is $\epsilon$-dense in $\bT^d$. \end{proof}

\section{Applications to unipotent subgroups}

\begin{lemma}\label{lemma: hyperplane fleeing Cayley ball} Let $G$ be a semigroup generated by a finite set $U$ and let $$G_n = \{ u_1 \cdots u_r ~|~ 0 \leq r \leq n \text{ and } u_1, \ldots, u_r \in U\}$$ be the ball of radius $n$ in the Cayley Graph of $G$. Suppose that $G$ acts on $\bR^d$ by linear maps and $a \in \bR^d$ satisfies that $Ga$ is not contained in any proper affine subspace. Then $G_d a$ is not contained in any proper affine subspace.

\end{lemma}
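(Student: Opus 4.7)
The plan is to consider the increasing chain of affine subspaces $V_n := \operatorname{aff}(G_n a) \subseteq \bR^d$ and show that once this chain stabilizes, it must be all of $\bR^d$; a dimension count will then force $V_d = \bR^d$. Since the identity lies in $G_0$ we have $a \in V_n$ for every $n$, so we may write $V_n = a + W_n$ where $W_n := \operatorname{span}\{ga - a : g \in G_n\}$ is a linear subspace of $\bR^d$, and these satisfy $W_0 \subseteq W_1 \subseteq \cdots$.

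The main claim would be: if $V_n = V_{n+1}$ for some $n$, then $V_n = \bR^d$. The key step is to verify that under this hypothesis, $W_n$ is $U$-invariant (hence $G$-invariant, since $U$ generates $G$). The point is that for every $u \in U$ and $g \in G_n$ the product $ug$ lies in $G_{n+1}$, so both $uga$ and $ua$ belong to $V_{n+1} = V_n = a + W_n$; this gives
\[
u(ga - a) \;=\; (uga - a) \;-\; (ua - a) \;\in\; W_n.
\]
Since $W_n$ is spanned (as a linear subspace) by the vectors $ga - a$ with $g \in G_n$, this shows $u W_n \subseteq W_n$. A straightforward induction on word length, using the recursion $G_{m+1} = G_m \cup U G_m$, then shows that $ha - a \in W_n$ for every $h \in G$; equivalently, $Ga \subseteq a + W_n = V_n$. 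The hypothesis that $Ga$ is not contained in any proper affine subspace forces $V_n = \bR^d$.

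To conclude, I would observe that the chain $V_0 \subseteq V_1 \subseteq \cdots \subseteq V_d$ is either strictly increasing or stabilizes at some step $n \leq d$. In the first case the dimensions jump by at least one each time, so $\dim V_d \geq d$ and hence $V_d = \bR^d$; in the second case the claim above gives $V_n = \bR^d$, so $V_d \supseteq V_n = \bR^d$ as well. In either case $G_d a$ is not contained in any proper affine subspace, as desired. I do not anticipate a significant obstacle here: the entire argument is elementary linear algebra, and the only subtle point is the verification that $U$ preserves $W_n$ once the chain stabilizes, which reduces immediately to the recursive description of $G_{n+1}$ in terms of $U$ and $G_n$.
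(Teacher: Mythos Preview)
Your proposal is correct and follows essentially the same approach as the paper: both arguments define the affine hull $V_n = a + W_n$ with $W_n = \operatorname{span}\{ga-a : g \in G_n\}$, verify via the identity $u(ga-a) = (uga-a) - (ua-a)$ that stabilization $V_n = V_{n+1}$ forces $W_n$ to be $U$-invariant (hence $Ga \subset V_n$, so $V_n = \bR^d$), and then use a dimension count to conclude that stabilization occurs by step $d$. The only cosmetic difference is that the paper first establishes $uW_n \subset W_{n+1}$ unconditionally and then specializes, whereas you work directly under the stabilization hypothesis.
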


\begin{proof}

Let $H_n$ denote the smallest affine subspace containing $G_n a$. In other words, $H_n = W_n + a$ where $$W_n = \bR\text{-span}\{ ga - a ~|~ g \in G_n\}.$$ Clearly $H_n \subset H_{n+1}$. We claim that if $H_N = H_{N+1}$ then $H_n = H_N$ for all $n \geq N$. First note that if $u \in U$ is a generator then $uW_n \subset W_{n+1}$, since for $g \in G_n$ we have that $$u(ga-a) = uga - ua = (uga - a) - (ua - a) \in W_{n+1} +W_1 \subset W_{n+1}.$$ Consequently, we have that for $w \in W_n$ that $$u(w+a) = uw + ua = uw + (ua - a) + a \in W_{n+1} + W_1 + a = W_{n+1} + a$$ and thus $$uH_n \subset H_{n+1}.$$

Thus if $H_N = H_{N+1}$ then $uH_N \subset H_{N+1} = H_N$ for all generators $u$ and thus $gH_N \subset H_N$ for all $g \in G$. Recalling that, by definition, $H_N$ contains $G_N a$ and thus by $G$-invariance $H_N$ contains $G_n a$ for all $n \geq N$, meaning that $H_N$ contains $H_n$ for all $n \geq N$. Thus $H_N = H_n$ for all $n \geq N$. Consequently, the smallest such $N$ for which $H_N = H_{N+1}$ satisfies $N \leq d$ (by dimension arguments). Thus $H_n = H_d$ for all $n \geq d$ which means that $H_d$ contains $G_n a$ for all $n \geq d$ and thus $G a \subset H_d$. By assumption that $Ga$ is not in any proper affine subspace, this means that $H_d = \bR^d$. \end{proof}

\begin{proof}[Proof of Theorem~\ref{thm: Glasner affine subspaces}]

Let $U = \{u_1, \ldots, u_m\}$ be a finite set of generators for $G$ where each $u_i$ is a unipotent element and use cyclic notation so that $u_i = u_{i+jm}$ for all $i, j \in \bZ$. Note that for each fixed $i$ the matrix $u_i ^n$ has entries that are integer polynomials in $n$ hence $$Q_N(n_1, \ldots, n_N) = \prod_{i=1}^{N} u_i^{n_i} \in M_{d \times d}(\bZ[n_1, \ldots, n_N]) $$ is a matrix with multivariate integer polynomial entries in the variables $n_1, \ldots, n_N$. Now let $N = dm$ and use Lemma~\ref{lemma: hyperplane fleeing Cayley ball} to get that $\{ Q_N(n_1, \ldots n_N)a ~|~ n_1, \ldots n_N \in \bZ\}$ is not contained in any proper affine subspace of $\bR^d$ for all fixed non-zero $a \in \widetilde{Y} - \widetilde{Y}$. In other words, for each fixed $a \in (\widetilde{Y} - \widetilde{Y})\setminus\{0\}$ if we let $P_1, \ldots, P_d \in \bR[n_1, \ldots, n_N]$ be the polynomials such that $$Q(n_1, \ldots, n_N)a = (P_1(n_1, \ldots, n_N), \ldots, P_d(n_1, \ldots, n_N))$$ then $P_1, \ldots, P_d, 1$ are linearly independent over $\bR$. But there exists a large enough $R \in \bZ_{>0}$ (independent of $a$) such that the substitutions $n_i \mapsto n_i^{R^{i-1}}$ induce a map $\bZ[n_1, \ldots, n_N] \to \bZ[n]$ that is injective on the monomials appearing in $Q_N(n_1, \ldots, n_N)$. Thus $P_1, \ldots, P_d, 1$ remain linearly independent over $\bR$ after making this substitution, thus $\{ Q(n, n^R, \ldots, n^{R^{N-1}})a ~|~ n \in \bZ \}$ is also not contained in any proper affine subspace. So the proof is complete by applying Theorem~\ref{thm: main polynomial intro} to the polynomial $A(x) = Q(x, x^R, \ldots, x^{R^{N-1}})$, which is independent of $\widetilde{Y}$ and thus the lower bound $k$ is uniform (once $G$ is fixed). \end{proof}


\begin{thebibliography}{99}

\bibitem{Alon-Peres} Alon, N.; Peres, Y. \emph{Uniform dilations.} Geom. Funct. Anal. 2 (1992), no. 1, 1--28. 

\bibitem{Barton-Montgomery-Hugh-Valeer} Barton, Jeffrey T.; Montgomery, Hugh L.; Vaaler, Jeffrey D. \emph{Note on a Diophantine inequality in several variables.} Proc. Amer. Math. Soc. 129 (2001), no. 2, 337--345.

\bibitem{Benoist-Quint} Benoist, Yves; Quint, Jean-François. \emph{Stationary measures and invariant subsets of homogeneous spaces (III).} Ann. of Math. (2) 178 (2013), no. 3, 1017--1059.

\bibitem{Berend-Peres} Berend, Daniel; Peres, Yuval. \emph{Asymptotically dense dilations of sets on the circle.} J. London Math. Soc. (2) 47 (1993), no. 1, 1--17.

\bibitem{BFGlasner} Bulinski, K.; Fish, A. \emph{Glasner property for unipotently generated group actions on tori.} Accepted in Israel Journal of Mathematics, preprint available at \url{https://arxiv.org/abs/2103.08912}.

\bibitem{BFTwisted} Bulinski, K.; Fish, A. \emph{Twisted Recurrence via Polynomial Walks.} preprint available at \url{https://arxiv.org/abs/1706.07921}

\bibitem{BFLM} Bourgain, J.; Furman, A.; Lindenstrauss, E.; Mozes, S. \emph{Stationary measures and equidistribution for orbits of nonabelian semigroups on the torus.} J. Amer. Math. Soc. 24 (2011), no. 1, 231--280.

\bibitem{Dong1} Dong, Changguang. \emph{On density of infinite subsets I.} Discrete Contin. Dyn. Syst. 39 (2019), no. 5, 2343--2359.

\bibitem{Dong2} Dong, Changguang. \emph{On density of infinite subsets II: Dynamics on homogeneous spaces.} Proc. Amer. Math. Soc. 147 (2019), no. 2, 751--761.

\bibitem{Glasner79} Glasner, Shmuel. \emph{Almost periodic sets and measures on the torus.} Israel J. Math. 32 (1979), no. 2-3, 161--172.

\bibitem{HeSaxce2022} He, W; de Saxcé, N. \emph{Linear random walks on the torus.} Duke Math. J. 171 (2022), no. 5, 1061--1133.

\bibitem{HuaBook} Hua, L. K. \emph{Additive theory of prime numbers.} Translations of Mathematical Monographs, Vol. 13 American Mathematical Society, Providence, R.I. 1965 xiii+190 pp.

\bibitem{Hua} Hua, L. K. \emph{On an exponential sum.} J. Chinese Math. Soc. 2 (1940), 301–312.

\bibitem{Kelly-Le} Kelly, Michael; L\^{e}, Th\'{a}i Ho\`ang. \emph{Uniform dilations in higher dimensions.} J. Lond. Math. Soc. (2) 88 (2013), no. 3, 925--940. 

\end{thebibliography}
\end{document}